\author{Thomas Athorne}
\address{School of Mathematics and Statistics, Sheffield University,\\
Hicks Building, Hounsfield Road, Sheffield, S3 7RH.}
\title{The Coalgebraic Structure of Cell Complexes}
\keywords{relative cell complexes, algebraic weak factorisation systems, small
object argument}
\date{\today}
\newcommand{\cat}[1]{\mathcal{#1}}
\newcommand{\cellcx}{\mathbf{CellCx}}
\newcommand{\strata}{\mathbf{Strata}}
\renewcommand{\bar}[1]{\overline{#1}}
\newcommand{\bound}{\partial}
\newcommand{\from}{\colon}
\newcommand{\map}[2]{\mathbf{#1}#2\mathbf{\textbf{-}Map}}
\newcommand{\toparr}{\mathbf{Top}\arr}
\newcommand{\arr}{^\mathbf{2}}
\newcommand{\iso}{\cong}
\newcommand{\N}{\mathbb{N}}
\newtheorem{prop}{Proposition}
\newtheorem{thm}{Theorem}
\newtheorem{lem}{Lemma}
\newtheorem{cor}{Corollary}
\begin{document}

\maketitle

\begin{abstract}
The relative cell complexes with respect to a generating set of cofibrations are
an important class of morphisms in any model structure. In the particular case
of the standard (algebraic) model structure on $\textbf{Top}$, we give a new
expression of these morphisms by defining a category of relative cell complexes,
which has a forgetful functor to the arrow category. This allows us to prove a
conjecture of Richard Garner: considering the algebraic weak factorisation
system given in that algebraic model structure between cofibrations and trivial
fibrations, we show that the category of relative cell complexes is equivalent
to the category of coalgebras.
\end{abstract}


\section{Introduction}

The aim of this paper is the proof of a conjecture of Richard Garner, which
describes how the recent and categorically motivated concept of an
\emph{algebraic weak factorisation system} is deeply connected, in the specific
case of topological spaces, to the more established and well understood idea of
relative cell complexes. Specifically, we will prove that the \emph{left map
structures} (coalgebra structures) of the canonical algebraic weak factorisation
system on \textbf{Top} are exactly the relative cell complexes. A relative cell
complex is a morphism with a \emph{property}: it can be expressed as a
transfinite composite of pushouts of coproducts of sphere inclusions. The left
map structures, on the other hand, specify an explicit choice of such an
expression. We will spend some of the paper making this notion of a relative
cell complex \emph{structure} precise.

The notion of an algebraic weak factorisation system (which we will write as
`awfs'---both singular and plural) was introduced by Grandis and Tholen
\cite{GrandisTholen} as a way to make the notion of a weak factorisation system
more amenable to study using the techniques of category theory. Weak
factorisation systems are fundamental to homotopy theory; they are the main
component of model category structures. However, they have some drawbacks: most
important for Grandis and Tholen was the fact that the classes of left and right
maps are not closed under colimits and limits. The definition of an awfs
involves a functorial factorisation of which the left functor is a comonad and
the right functor is a monad. The notions of left and right maps are naturally
replaced with the coalgebras and algebras, meaning that colimits and limits are
automatically available.

Another possible disadvantage of weak factorisation systems is the standard
method of constructing them: the \emph{small object argument}. This is a
transfinite induction which does the job well enough for many purposes, but
which seems at odds with what should be a very natural concept in category
theory; it relies on terminating a sequence at some arbitrary choice of ordinal
because it won't converge, it has no universal property and it cannot be
considered as an instance of any other transfinite categorical construction. In
his paper \cite{Understanding}, Richard Garner demonstrates that this can be
cured; there is a very natural variant of the small object argument for awfs
which does converge, does have a nice universal property, and can be considered
as an instance of generating a free monoid in a monoidal category. This puts the
small object argument firmly in the context of well understood categorical
algebra.

This discovery of Garner's has demonstrated the value of awfs as a much neater
structure than weak factorisation systems. Emily Riehl has since gone on to
adapt the definition of model category to that of an \emph{algebraic model
category}, using awfs in place of weak factorisation systems, and this work can
be found in \cite{AlgModel}, \cite{RiehlThesis} and \cite{MonAlg}. At the same
time Garner has been applying awfs in helping to understand higher categories,
using them to classify homomorphisms between weak $n$-categories---see
\cite{Homo}. 

Throughout the work of Riehl there is an emphasis on the right maps---the
algebras---of a given awfs. There was a good reason for this; Garner proved (in
\cite{Understanding}) a theorem that characterised the right maps in any
cofibrantly generated awfs---a right map structure on a morphism is precisely a
choice of solution for every lifting problem with a generating left map.
Unfortunately, there was no such easy description of left map structures. It was
clear that any relative cell complex had a left map structure and that any left
map was at least a retract of a relative cell complex; Garner's conjecture,
which we attack in this paper, was that the left maps are exactly the relative
cell complexes (in the specific case of \textbf{Top} with the standard awfs). 

This result will allow us to access the left maps too. What is more, the left
maps---as cell complexes---are in many ways much more accessible and
understandable than the right maps. It is hard to give an example of a right map
structure that is neither trivial nor very complex, because of the infinite
number of liftings that must be specified. But to give a simple, finite, example
of a left map structure is very easy! The left maps have a constructive flavour
that makes them, in the author's opinion, easier to work with. The result will
also establish an important link between awfs and the homotopy theory which is
already understood. The relative cell complexes are a class of maps that have
been around for a long time, and they are fundamental to model categories; in a
sense, it is important to check that they are indeed the left maps of the awfs
in order to make sure that the awfs fits properly into the existing theory.

While this paper's result is restricted to one particular awfs, the technique
used should easily generalise to many other examples. In this case, there are
potential applications to higher category theory: in the case of weak
$n$-categories, the relative cell complexes are very closely connected to the
idea of \emph{computads}. This potential for generalisation will be discussed
further in Section~\ref{sec:final}.

\paragraph{The approach.}
We will prove the theorem by first considering the existing definition of
relative cell complex, which gives a class of morphisms in \textbf{Top}. In
Section~\ref{sec:cellcx} we adjust this definition in order to obtain a category
$\cellcx$ which has a natural forgetful functor $U$ to $\toparr$. After making
sure that this category defines a sensible notion of `cell complex structure' on
a map, we exhibit a right adjoint to $U$, in Section~\ref{sec:adjunction}. This
adjunction can be thought of as a nice concrete expression of the small object
argument; the universal property of a free cell complex is exactly analogous to
the lifting property the small object argument is designed to obtain. The
smallness condition, which we expect to find somewhere, appears sooner than you
might expect---it is required for composition to be defined on cell complexes,
in Section~\ref{sec:cellcx}.

In Section~\ref{sec:comonadicity} we demonstrate that the adjunction is
comonadic, so cell complexes are coalgebras for the comonad $UK$. In
Section~\ref{sec:awfs} we will see how $UK$ is the left hand side of an awfs,
and in Section~\ref{sec:univ} we describe the universal property that $\cellcx$
satisfies. This allows us, in the remaining Section~\ref{sec:result}, to prove
that $UK$ is isomorphic as a comonad to $\mathbb{L}$, the left hand functor of
the awfs we are interested in. This proves our main result: that our notion of
cell complex structure is equivalent to the left map structures.

\paragraph{Acknowledgements.}
The author would first of all like to thank his PhD supervisor Nick Gurski, as
well as the other members of the Sheffield Category Theory Seminar for providing
a stimulating environment. He is also grateful for useful discussions he had
with both Richard Garner and Emily Riehl in their recent visits to Sheffield.
Thanks are also due to the EPSRC for funding. 

\section{Background}\label{sec:background}

Any functorial factorisation on a category $\cat{C}$ can be described as a pair
$(L,R)$ of a copointed endofunctor and a pointed endofunctor on $\cat{C}\arr$
(the category of arrows in $\cat{C}$), with the following properties: 
\begin{itemize}
	\item $L$ is domain preserving,
	\item $R$ is codomain preserving, 
	\item the functors $\text{cod}\circ L$ and $\text{dom}\circ R$ are
equal,
	\item $Rf\circ Lf=f$ for any $f$.
\end{itemize}
It is useful to give the functor $\text{cod}\circ L$, or equivalently
$\text{dom}\circ R$, a name; we will call it the \emph{central} functor of
$(L,R)$, and generally write it as $M\from\cat{C}\arr\to\cat{C}$.

In an algebraic weak factorisation system, we simply ask that $L$ be a comonad
and $R$ be a monad. This turns out to be essentially the same as making a choice
of solution for every lifting problem between a coalgebra and an algebra. We
should note that the original name was \emph{natural weak factorisation system};
we follow the name adopted by \cite{AlgModel}.

\begin{defn}
An \emph{algebraic weak factorisation system} on a category $\cat{C}$ is a pair
$(\mathbb{L},\mathbb{R})$ where $\mathbb{L}=(L,\vec{\epsilon},\vec{\delta})$ is
a comonad on $\cat{C}\arr$, $\mathbb{R}=(R,\vec{\eta},\vec{\mu})$ is a monad on
$\cat{C}\arr$, the copointed endofunctor $(L,\vec{\epsilon})$ together with the
pointed endofunctor $(R,\vec{\eta})$ make up the data of a single functorial
factorisation, and the pair satisfies the \emph{distributivity axiom}, explained
below.
\end{defn}

The final condition will ensure that the monad and comonad behave properly with
respect to one another. It follows from the monad laws that $\vec{\delta}$ must
have trivial domain component and $\vec{\mu}$ must have trivial codomain
component, so their components take the forms $(1,\delta_f)$ and $(\mu_f,1)$:
\[\xymatrix{
\bullet\ar@{=}[r]\ar[d]_{Lf}\ar@{}[dr]|{\vec{\delta}_f} & \bullet\ar[d]^{LLf} &
MRf\ar[r]^{\mu_f}\ar[d]_{RRf}\ar@{}[dr]|{\vec{\mu}_f} & f\ar[d]^{Rf} \\
Mf\ar[r]_{\delta_f} & MLf & \bullet\ar@{=}[r] & \bullet 
}\] 
for some $\delta_f$ and $\mu_f$. Then we can define a natural transformation
$\Delta\from LR\to RL$ with components given by $(\delta_f,\mu_f)$. The
distributivity axiom says that this is a \emph{distributive law} of the comonad
over the monad, meaning that it commutes with the unit, counit, multiplication
and comultiplication transformations.

The best notions of left map and right map are now given to us by the algebraic
structure. Let $\map{L}{}$ be the category of coalgebras for the comonad
$\mathbb{L}$ and let $\map{R}{}$ be the category of algebras for the monad
$\mathbb{R}$. What exactly does a \emph{left map structure} on a morphism in
$\cat{M}$ look like? As always, a coalgebra is an object $f\from A\to B$
equipped with a structure map $f\to Lf$, which appears in this case as map
$\alpha$: 
\[\xymatrix{
A\ar[r]_{Lf} & Mf\ar[r]_{Rf} & B\ar@/_1pc/[l]_{\alpha}
}\] 
a kind of `partial inverse' to $f$. The coalgebra axioms translate into very
natural properties for $\alpha$; in particular $\alpha\circ f=Lf$ and
$Rf\circ\alpha=1_B$. They also force the domain part of the structure map to be
the identity on $A$---this is why we can represent the left map structure with
just one morphism in \textbf{Top}.

In his paper \cite{Understanding} Garner introduces a revised version of the
small object argument. This allows us to take any category $\cat{I}$ over
$\cat{C}\arr$ (assuming some smallness conditions similar to those for the
original small object argument) and produce an awfs for which the category
$\cat{I}$ is naturally a subcategory of the left map category. The argument is a
transfinite iteration where a single step performed on $f\from A\to B$ involves
considering the set of commutative squares
\[\xymatrix{
  X\ar[r]\ar[d]_i & A\ar[d]^f     \\
  Y\ar[r]       & B
}\]
where $i$ is in the category $\cat{I}$, and then forming the pushout of $A$ with
many copies of each $i\in\cat{I}$, one for each of the squares. This can be
visualised as `gluing' many cells onto $A$---one for every way such a cell can
be included in $B$ via $f$. When we iterate, we are essentially adding layer
after layer of cells in this way. In Garner's small object argument, there is a
mechanism to prevent us from adding superfluous cells, and as a result the
sequence converges. We obtain a factorisation of $f\from A\to Mf\to B$, where
any cell complex in $B$ can be lifted to $Mf$.

We now consider the set of morphisms in \textbf{Top}, given by the the inclusion
$S^{n-1}\to D^{n}$ for all $n\geq0$, where $S^{-1}$ is considered to be the
empty space and $S^0$ the pair of endpoints for $D^1$. We call this $\cat{J}$
and will treat it as a discrete category over $\toparr$. Applying the small
object argument to $\cat{J}$ produces an awfs on \textbf{Top} which is arguably
the most fundamental interesting example for homotopy theory; it is very close
to the weak factorisation system between cofibrations and trivial Serre
fibrations that appears in the standard model category structure on
\textbf{Top}. For the rest of this paper we will write it as
$(\mathbb{L},\mathbb{R})$; this is the awfs for which we prove our result. 

\section{Strata}\label{sec:strata} 

The class of morphisms in \textbf{Top} which we write as $\cat{J}$-cell and call
the \emph{relative $\cat{J}$-cell complexes} is usually defined to be the
smallest class containing $\cat{J}$ which is closed under coproducts, pushouts
and transfinite composition. Starting with this notion, we seek to define a
\emph{category} of relative $\cat{J}$-cell complexes, which we will call
$\cellcx$. There will be a forgetful functor $U\from\cellcx\to\toparr$ whose
image is precisely $\cat{J}$-cell. In other words, every morphism in
$\cat{J}$-cell will have one or more cell complex structure. We'll introduce
some helpful notation at this point. If $A$ is a relative cell complex, we will
generally write $UA$ as $\bound A\to\bar{A}$, and we'll call $\bound A$ the
\emph{boundary} or \emph{base space} of $A$, and $\bar{A}$ the \emph{body} of
$A$.

In our definition, cell complexes will be formed as sequences of layers which we
call \emph{strata}. Each stratum is the pushout of a coproduct of single cells.

\begin{defn}
A \emph{stratum} $(X,S)$ consists of the following data: a topological space
$X$, a set of \emph{cells} $S$, and for each $s\in S$ a choice of
$\kappa_s\in\cat{J}$ and a continuous map $b_s\from\bound\kappa_s\to X$.
\end{defn}

The boundary of the stratum is $X$, and the body is given by the following
pushout square:
\[\xymatrix{
  \coprod_{s\in S}\bound\kappa_s \ar[rr]^{\coprod_{s\in
S}U\kappa_s}\ar[d]_{\langle b_s\rangle_{s\in S}} && \coprod_{s\in
S}\bar{\kappa_s}  \ar[d]    \\
  X \ar[rr] && \bar{(X,S)}                  \\
}\]
So we can consider a stratum as a special sort of relative cell complex, for
which $U(X,S)$ is the bottom arrow of the diagram. We will later define general
relative cell complexes as sequences of strata satisfying certain properties;
first we will consider some properties of the category of strata. To work with
the category $\strata$ we must first define the morphisms.

\begin{defn}
Let $(X,S)$ and $(Y,T)$ be any two strata. A \emph{morphism of strata}
$(f,p)\from (X,S)\to (Y,T)$ is a continuous function $f\from X\to Y$ and a
function $p\from S\to T$, satisfying the requirement that for every $s\in S$,
$\kappa_s=\kappa_{p(s)}$ and $f\circ b_s=b_{p(s)}$.
\end{defn}

Composition is the obvious thing: $(f,p)\circ(f',p')=(f\circ f',p\circ p')$. The
functor $U\from\strata\to\toparr$ is defined as one would expect; $\bound(f,p)$
is just $f$, and $\bar{(f,p)}\from\bar{(X,S)}\to\bar{(Y,T)}$ is the unique map
making the various diagrams commute.

The first thing we note about $\strata$ is that each object $J$ of $\cat{J}$ has
a canonical strata-structure, $(\bound J,\{*\})$ where $\kappa_*=J$ and
$b_*=1_{\bound J}$. Secondly, we note that strata-structures can be transferred
along pushout: if $f\from X\to Y$ has a strata-structure $(X,S)$ then in the
pushout
\[\xymatrix{
  X \ar[r]^f\ar[d]_g & Y \ar[d]    \\
  Z \ar[r] & Z \coprod_X Y                  \\
}\]
the bottom map has a canonical strata-structure, given in the following
definition.

\begin{defn}
Given any stratum $(X,S)$ and map $g\from X\to Z$, the \emph{pushforward} of
$(X,S)$ along $g$, which we write $g_*(X,S)$, is the stratum $(Z,S)$ with each
$\kappa_s$ the same as in the original stratum and each $b_s$ given by the
original $b_s$ composed with $g$.
\end{defn}

We can see that $Z\coprod_X Y$ is the body of $(Z,S)$ by commutativity of
pushouts. Next we will consider colimits in $\strata$.
(0.45)
\begin{prop}
The category $\strata$ has all small colimits, and the functor
$U\from\strata\to\toparr$ preserves them.
\end{prop}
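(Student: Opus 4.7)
The plan is to build colimits in $\strata$ componentwise—separating the space component, the indexing-set component, the type assignment, and the attaching maps—and then to deduce $U$-preservation from an interchange of colimits.

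Given a diagram $D\from\cat{I}\to\strata$ with $D(i)=(X_i,S_i)$, I would construct its colimit $(X,S)$ as follows. Take $X:=\colim_i X_i$ in $\textbf{Top}$ and $S:=\colim_i S_i$ in $\textbf{Set}$. Because every stratum morphism in the diagram preserves cell types (the axiom $\kappa_s=\kappa_{p(s)}$), the type functions $S_i\to\text{ob}(\cat{J})$ descend to a well-defined type function on $S$, assigning each $s\in S$ a cell $\kappa_s\in\cat{J}$. For each such $s$, pick a representative $s_i\in S_i$ and define $b_s\from\bound\kappa_s\to X$ as $b_{s_i}$ postcomposed with the structural map $X_i\to X$; the morphism axiom $f\circ b_s=b_{p(s)}$ makes this independent of the representative. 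Verifying the universal property is then routine: a cocone under $D$ with apex $(Y,T)$ decomposes as a compatible cocone of topological spaces together with a compatible cocone of sets whose components satisfy the stratum-morphism conditions at each node, and the universal properties of $X$ and $S$ deliver the unique factorization.

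For $U$-preservation, colimits in $\toparr$ are computed componentwise on domain and codomain, so it suffices to show $\bar{(X,S)}\iso\colim_i\bar{(X_i,S_i)}$. Each body $\bar{(X_i,S_i)}$ is itself a pushout, so this will follow from the principle that colimits commute with colimits. Concretely one verifies the identifications $\coprod_{s\in S}\bound\kappa_s\iso\colim_i\coprod_{s\in S_i}\bound\kappa_s$ and the analogous one for $\bar{\kappa_s}$; the slickest route is to observe that $S\mapsto\coprod_{s\in S}\bound\kappa_s$ is essentially a left Kan extension along the type map into $\cat{J}$, and left Kan extensions commute with colimits in their domain variable.

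The main obstacle is bookkeeping rather than conceptual: one must carefully track how the indexing sets $S_i$ amalgamate, since elements of $\colim_i S_i$ are equivalence classes and the attaching map $b_s$ associated to such a class must be shown to be well-defined modulo the equivalence (and to be natural in $(Y,T)$). Once this verification is secured, everything else reduces to the universal properties of the component colimits in $\textbf{Top}$ and $\textbf{Set}$ together with the standard Fubini theorem for iterated colimits.
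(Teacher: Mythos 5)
Your construction is correct, and it is essentially the paper's argument in a different packaging: the paper reduces to coproducts and coequalisers (which suffice for all small colimits), whereas you build a general colimit in one step as $(\colim_i X_i,\colim_i S_i)$ and then invoke Fubini for the preservation claim. The trade-off is exactly where you locate it: in the paper's coequaliser case the equivalence relation on the cell set is generated by a single pair of maps $p,q$, so well-definedness of the attaching map $b_u$ on an equivalence class follows immediately from $b_{p(s)}=f\circ b_s$ and $b_{q(s)}=g\circ b_s$ composed with the coequaliser map; in your version the elements of $\colim_i S_i$ are classes under a zigzag-generated relation, so the independence-of-representative check is a (routine but necessary) induction along zigzags using the morphism axiom at each step together with commutativity of the structural cocone maps --- you flag this as the main bookkeeping burden, which is accurate. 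For preservation, your observation that $S\mapsto\coprod_{s\in S}\bound\kappa_s$ is colimit-preserving (being a coproduct, or a left Kan extension) plus interchange of the defining pushout with $\colim_i$ is a cleaner and more uniform justification than the paper's two separate remarks that pushouts commute with coproducts and with coequalisers. Both routes are sound; yours is marginally more general and the paper's is marginally more elementary.
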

\begin{proof}
We check coproducts and then coequalisers. Let $(X_a,S_a)$ be a set of strata
indexed by $A$. We claim that
\[\coprod_{a\in A}(X_a,S_a)=(\coprod_{a\in A}X_a,\coprod_{a\in A}S_a)\]
where each $\kappa_s$ is the same as its original on the left, and each $b_s$ is
given by composition of its original with the inclusion map. Given a set of
strata morphisms $(\lambda_a,\rho_a)\from(X_a,S_a)\to(Y,T)$ there is a unique
pair $(f,p)$ making the following diagrams commute:
\[\xymatrix{
(X_a,S_a)\ar[r]\ar[dr]_{(\lambda_a,\rho_a)} 
 & (\coprod_{a\in A}X_a,\coprod_{a\in A}S_a)\ar[d]^{(f,p)} \\
 & (Y,T)
}\]
We must merely check that this $(f,p)$ is a strata morphism; given $s\in S_a$,
$\kappa_s=\kappa_{\rho_a(s)}=\kappa_{p(s)}$, and $b_{p(s)}=\lambda_a\circ
b_s=f\circ i_a\circ b_s$ where $i_a$ is the inclusion $X_a\to\coprod X_a$. We
see that $U$ preserves this coproduct because pushouts and coproducts commute.

As for coequalisers, consider a pair of strata morphisms $(f,p)$ and $(g,q)$
between $(X,S)$ and $(Y,T)$. We claim their coequaliser is the strata formed by
the coequaliser of $f$ and $g$ and the coequaliser of $p$ and $q$, which we will
write as $(Z,U)$. Each $u\in U$ is an equivalence class of elements of $T$, all
of which must have the same $\kappa_t$, giving us $\kappa_u$. They don't
necessarily all have the same $b_t$. However, we know the equivalence relation
is generated by $p(s)\sim q(s)$. Since $b_{p(s)}=f\circ b_s$ and
$b_{q(s)}=g\circ b_s$ when we compose them with the coequaliser map $Y\to Z$ we
get a unique definition of $b_u$, which makes the map $(l,m)$ below into a
strata morphism.
\[\xymatrix{
(X,S)\ar@<1ex>[r]^{(f,p)}\ar@<-1ex>[r]_{(g,q)} &
(Y,T)\ar[r]^{(l,m)}\ar[dr]_{(h,r)} & (Z,U)\ar@{.>}[d]^{(k,s)} \\
   &                               & (A,V)
}\]

To check the coequaliser property, let $(h,r)$ coequalise $(f,p)$ and $(g,q)$.
We get a unique pair $(k,s)$ making the diagrams commute; as before we must
simply check this is a strata morphism. For $u\in U$,
$\kappa_{s(u)}=\kappa_{r(t)}=\kappa_t=\kappa_u$, using any $t$ in the
equivalence class of $u$. Also, $b_{s(u)}=h\circ b_t=k\circ l\circ b_t=k\circ
b_u$. Finally, we must check $U$ preserves this coequaliser; clearly
$\coprod\kappa_u$ is the coequaliser of the two maps
$\coprod\kappa_s\to\coprod\kappa_t$ given by $p$ and $q$, so the result follows
because coequalisers commute with pushouts.
\end{proof}

Finally we prove a useful lemma about morphisms of strata.

\begin{lem}[Pullback Lemma for Strata]\label{lem:stratapullback}
Let $(f,p)\from(X,S)\to(Y,T)$ be any strata morphism. The commutative square
defined by $U(f,p)$ is a pullback square. 
\end{lem}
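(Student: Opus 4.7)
The square in question has $X$ and $\bar{(X,S)}$ on the left, $Y$ and $\bar{(Y,T)}$ on the right, with horizontal maps $f$ and $\bar{(f,p)}$ and vertical maps $U(X,S)$ and $U(Y,T)$. The plan is to verify the pullback property directly, using the explicit description of the pushouts that define $\bar{(X,S)}$ and $\bar{(Y,T)}$.

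First I would observe that every $\kappa\in\cat{J}$ is the closed inclusion $S^{n-1}\hookrightarrow D^{n}$, so each $\coprod_{s\in S}U\kappa_s$ is a closed inclusion. Since the pushout of a closed inclusion along any continuous map is again a closed inclusion in $\mathbf{Top}$, both $U(X,S)\from X\to\bar{(X,S)}$ and $U(Y,T)\from Y\to\bar{(Y,T)}$ are closed embeddings. Set-theoretically this yields the decomposition
\[\bar{(X,S)}=\mathrm{im}(X)\sqcup\bigsqcup_{s\in S}\bigl(\bar{\kappa_s}\setminus\mathrm{im}(\bound\kappa_s)\bigr),\]
and similarly for $\bar{(Y,T)}$.

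Next I would trace the map $\bar{(f,p)}$ through this decomposition. By its defining universal property it sends $\mathrm{im}(X)$ into $\mathrm{im}(Y)$ via $f$, and sends each cell interior $\bar{\kappa_s}\setminus\mathrm{im}(\bound\kappa_s)$ into the corresponding interior of cell $p(s)$, using $\kappa_s=\kappa_{p(s)}$. Since in $\bar{(Y,T)}$ the cell interiors are disjoint from $\mathrm{im}(Y)$, we conclude $\bar{(f,p)}^{-1}(\mathrm{im}(Y))=\mathrm{im}(X)$.

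Finally, to verify the universal property of the pullback, suppose $g\from Z\to Y$ and $h\from Z\to\bar{(X,S)}$ satisfy $U(Y,T)\circ g=\bar{(f,p)}\circ h$. For each $z\in Z$, $\bar{(f,p)}(h(z))$ lies in $\mathrm{im}(Y)$, so by the preceding analysis $h(z)\in\mathrm{im}(X)$. Hence $h$ factors set-theoretically through $X$, and continuity of the induced map $h'\from Z\to X$ follows because $U(X,S)$ is an embedding; uniqueness and the identity $f\circ h'=g$ follow from injectivity of $U(X,S)$ and $U(Y,T)$ respectively. The main obstacle is the topological fact that pushouts of closed inclusions are again closed inclusions, which is standard but is where the real content lies; once that is in hand, the rest is a bookkeeping exercise using the explicit pushout description.
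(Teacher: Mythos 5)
Your proof is correct and takes essentially the same route as the paper: both arguments rest on the set-level decomposition of the body into the (closed) image of the boundary plus the open cell interiors, the observation that $\bar{(f,p)}$ carries cell interiors to cell interiors so that $\bar{(f,p)}^{-1}(\mathrm{im}(Y))=\mathrm{im}(X)$, and the fact that the subspace topology on $X\subset\bar{(X,S)}$ settles the topological side. You spell out the closed-inclusion fact and the universal property check more explicitly than the paper does, but the underlying argument is the same.
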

\begin{proof}
We are considering the square
\[\xymatrix{
  X \ar[r]\ar[d]_f & \bar{(X,S)}\ar[d]^{\bar{(f,p)}}    \\
  Y \ar[r] & \bar{(Y,T)}                  
}\]
which we can demonstrate to be a pullback square using our understanding of
limits and colimits in \textbf{Top}.

Firstly, we know that $X$ is a subspace of $\bar{(X,S)}$. Given a point
$x\in\bar{(X,S)}$, assume it is not in $X$. Then it must be in some cell $s\in
S$: it is a point in $\bar{\kappa_s}$, and not a point of the boundary
$\bound\kappa_s$. So its image under $\bar{(f,p)}$ is in the same position in
the corresponding cell $p(s)\in T$, and hence not in $Y$. This demonstrates that
as a point set, $X$ is the pullback; since it is a subspace of $X$, and this
determines its open sets, it is also the pullback as a space.
\end{proof}

\section{Cell complexes}\label{sec:cellcx}

Now we understand the category of strata, we will move on to general cell
complexes. These are defined as infinite sequences of strata satisfying two
important properties. The first property says that the strata link together
correctly, while the second is a kind of normal form property: it says that
every cell appears in the lowest possible stratum.

\begin{defn}
An infinite sequence of strata is \emph{connected} if the boundary of each
stratum is equal to the body of the previous stratum.
\end{defn} 

\begin{defn}
An infinite sequence of strata, $(X_n,S_n)_\N$, is \emph{proper} if there is no
$s\in S_n$ for any $n$ such that $b_s$ can be factored through the boundary of a
lower stratum.
\end{defn}

\begin{defn}
A \emph{relative cell complex} is a proper connected sequence of strata. The
image under $U$ of such a relative cell complex is the transfinite composite in
\textbf{Top} of all the $U(X_n,S_n)$. Again we will talk about the
\emph{boundary} $\bound(X_n,S_n)_\N$ and the \emph{body} $\bar{(X_n,S_n)_\N}$.
\end{defn}

A relative cell complex has \emph{infinite height} if no $S_n$ is empty.
Alternatively it has \emph{height} $n$ if $S_{n+1}$ is the first empty set of
cells. Clearly (because of the required property of properness) if $S_n$ is
empty, then so is $S_m$ for all $m>n$. The \emph{trivial cell complex} on $X$ is
the unique height 0 cell complex with boundary $X$. We define a morphism of cell
complexes by similarly extending the definition for morphisms of strata.

\begin{defn}
Given any two relative cell complexes $(X_n,S_n)_\N$ and $(Y_n,T_n)_\N$, a
\emph{relative cell complex morphism} between them is a sequence of morphisms of
strata, $(f_n,p_n)_\N\from(X_n,S_n)_\N\to(Y_n,T_n)_\N$, satisfying the
\emph{coherence} condition---that $f_{n+1}=\bar{(f_n,p_n)}$ for all $n\geq0$.
The image under $U$ appears as 
\[\xymatrix{
X_0\ar[r]\ar[d]_{f_0} & X_1\ar[r]\ar[d]_{f_1} & X_2\ar[r]\ar[d]_{f_2} & \ldots
\ar[r] & \bar{(X_n,S_n)_\N}\ar[d]^{\bar{(f_n,p_n)_\N}} \\
Y_0\ar[r]             & Y_1\ar[r]             & Y_2\ar[r]             & \ldots
\ar[r] & \bar{(Y_n,T_n)_\N} 
}\]
where $\bar{(f_n,p_n)_\N}$ is the unique map that makes the diagram commute.
\end{defn}

We will write $\cellcx$ for the category whose objects are relative cell
complexes and whose morphisms are relative cell complex morphisms. $\strata$
embeds in $\cellcx$ as the subcategory of complexes with height less than or
equal to one. 

We must check a few important facts about $\cellcx$. Firstly, we want to extend
the result about colimits from $\strata$ to $\cellcx$. Secondly, we look at some
other constructions that can be made in the category. Finally, to make sure it
is a good candidate for a `category of relative cell complexes', we show that
the image of $U$ in $\toparr$ is exactly the class $\cat{J}$-cell.

\begin{prop}\label{prop:colimitscellcx}
The category $\cellcx$ has all small colimits, and the functor $U$ preserves
them.
\end{prop}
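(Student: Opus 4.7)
The plan is to construct all small colimits from coproducts and coequalisers, in each case building the colimit stratum-by-stratum via the colimits in $\strata$ established in the previous proposition. Given a small diagram in $\cellcx$, set $(Z_n,U_n)$ to be the colimit in $\strata$ of the $n$-th strata of the diagram. Connectedness of the sequence is then immediate: since $U$ preserves $\strata$-colimits, $\bar{(Z_n,U_n)}$ is the colimit of the bodies $\bar{(X_n^a,S_n^a)}$, which by coherence in the original diagram equals the colimit of the $X_{n+1}^a$, namely $Z_{n+1}$. The universal property, preservation by $U$, and coherence of the mediating morphism will then follow routinely from the corresponding facts in $\strata$.

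The substantive content is verifying properness. For coproducts $(Z_n,U_n)=\coprod_a(X_n^a,S_n^a)$, each cell $u\in U_n$ lies in some summand $S_n^{a_0}$ and its attaching map factors through the inclusion $X_n^{a_0}\hookrightarrow\coprod_a X_n^a$; if it further factored through $Z_m=\coprod_a X_m^a$, then since the inclusions $X_n^a\hookrightarrow Z_n$ are summands it would factor through $X_m^{a_0}$, contradicting properness of $(X_n^{a_0},S_n^{a_0})_\N$.

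Coequalisers are the main obstacle. Given parallel $(f_n,p_n)_\N,(g_n,q_n)_\N\from(X_n,S_n)_\N\to(Y_n,T_n)_\N$, form $(Z_n,U_n)$ stratum-wise with coequaliser maps $l_n\from Y_n\to Z_n$. A cell $u\in U_n$ is represented by some $t\in T_n$ with attaching map $b_u=l_n\circ b_t$; if $b_u$ factored through $Z_m$ for $m<n$, we want to force $b_t$ to factor through $Y_m$, contradicting properness of $(Y_n,T_n)_\N$. The key lemma is that $l_n^{-1}(l_n(Y_m))=Y_m$, i.e.\ the equivalence relation on $Y_n$ that produces $Z_n$ never identifies a point of $Y_n\setminus Y_m$ with a point of $Y_m$. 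I would prove this by induction on $n$: decomposing $X_n=X_{n-1}\cup(X_n\setminus X_{n-1})$, coherence of the given strata morphisms forces $f_n,g_n$ to send $X_{n-1}$ into $Y_{n-1}$, and to send the interiors of the $(n-1)$-cells (the elements of $X_n\setminus X_{n-1}$) homeomorphically to the interiors of the corresponding cells in $Y_n\setminus Y_{n-1}$. Hence each generating equivalence $f_n(x)\sim g_n(x)$ lies entirely within $Y_{n-1}$ or entirely within $Y_n\setminus Y_{n-1}$, no chain of them can cross between these pieces, and the inductive hypothesis then controls how $Y_m$ sits inside $Y_{n-1}$. Together with the Pullback Lemma, which makes $Y_m\hookrightarrow Y_n$ a subspace inclusion, this gives a continuous factorisation of $b_t$ through $Y_m$, and hence the required contradiction.
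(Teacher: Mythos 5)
Your proposal is correct in substance but routes the hardest step differently from the paper, and in the coequaliser case you do more work than is needed. The paper does not split into coproducts and coequalisers: it computes an arbitrary small colimit stratum-wise and gives a single uniform properness argument. Namely, if a cell $u\in U_n$ of the proposed colimit had $b_u$ factoring through $Z_{n-1}$, one picks any cell $s$ in a diagram object with $p_{dn}(s)=u$ (such a cell exists because the cell sets of the colimit are colimits of cell sets), and then applies the Pullback Lemma for strata (Lemma \ref{lem:stratapullback}) to the colimit cocone leg $(f_{d(n-1)},p_{d(n-1)})\from(X_{d(n-1)},S_{d(n-1)})\to(Z_{n-1},U_{n-1})$: since $f_{dn}\circ b_s=b_u$ factors through $Z_{n-1}$, the pullback square forces $b_s$ to factor through $X_{d(n-1)}$, contradicting properness of that diagram object. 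Your coproduct argument is a special case of this. For coequalisers, your ``key lemma'' $l_n^{-1}(l_n(Y_m))=Y_m$ is precisely the assertion that the square formed by the cocone leg $(l_m,m_m)\from(Y_m,T_m)\to(Z_m,U_m)$ is a pullback, which is again an instance of Lemma \ref{lem:stratapullback} (applied to the coequaliser strata morphism constructed in the previous proposition, and iterated from $m$ up to $n$); the induction on $n$ analysing how the generating relations $f_n(x)\sim g_n(x)$ interact with the decomposition of $Y_n$ into $Y_{n-1}$ and cell interiors is essentially a re-proof of that lemma in this special case. So your argument goes through, but the paper's approach buys uniformity over all colimit shapes and lets the already-established Pullback Lemma absorb all the point-set reasoning; yours makes the combinatorics of the identifications explicit, at the cost of duplicating that lemma and of having to treat each colimit type separately.
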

\begin{proof}
In fact, the colimits of $\cellcx$ can be computed component-wise; so given a
diagram of cell complexes the colimit is given by taking a colimit of strata for
each natural number $n$. This defines a sequence of strata, and a sequence of
strata morphisms. We must check these are connected, proper and coherent. We
will need some notation; let the diagram consist of $(X_{dn},S_{dn})_\N$ for $d$
ranging over the objects of the diagram category, let the proposed colimit be
$(Z_n,U_n)_\N$ and let the colimit cocone morphisms be
$(f_{dn},p_{dn})_\N\from(X_{dn},S_{dn})_\N\to(Z_n,U_n)_\N$. 

By connectedness, each $\bar{(X_{dn},S_{dn})}=X_{d(n+1)}$. Since $U$ preserves
the colimits of strata, $\bar{(Z_n,U_n)}$ is the colimit of the
$\bar{(X_{dn},S_{dn})}$, meaning that we can choose it to be equal to $Z_{n+1}$.
This shows our proposed colimit cell complex is connected. By the same argument
$f_{d(n+1)}=\bar{(f_{dn},p_{dn})}$; so we've also shown that the
$(f_{dn},p_{dn})_\N$ are all coherent.

Properness is harder to show; we will use the Pullback Lemma for strata. Assume
the proposed colimit is improper: let $u\in U_n$ with $b_u$ factorable through
$Z_{n-1}$ as in the diagram.
\[\xymatrix@=40pt{
\bound\kappa_u\ar@{.>}[r]\ar@/^1pc/[rr]^{b_s}\ar@/_0.5pc/[rd]_{b'_u}\ar[rrd]
^(0.7){b_u} & X_{d(n-1)}\ar[r]\ar[d]^(0.3){f_{d(n-1)}} & X_{dn}\ar[d]^{f_{dn}}\\
     & Z_{n-1}\ar[r] & Z_n
}\]
Now let $s\in S_{dn}$ be some cell with the property that $p_{dn}(s)=u$; such a
cell must exist by the definition of $U_n$ as a colimit. Thus the map $b_u$ can
also be factorised through $X_{dn}$ using $b_s$. Using the pullback square we
get a factorisation of $b_s$ through $X_{d(n-1)}$, yeilding a contradiction
because the objects in the diagram are assumed to be cell complexes, and hence
proper.

We also need to check the colimit property. Given a cocone of morphisms
$(X_{dn},S_{dn})_\N\to(Y_n,T_n)_\N$, there is a unique candidate sequence of
strata morphisms $(g_n,q_n)_\N\from(Z_n,U_n)_\N\to(Y_n,T_n)_\N$ given by each
individual colimit property in $\strata$. We just have to check this sequence is
coherent; this follows easily from the fact that each sequence of strata is
connected and that $U$ preserves colimits of strata. Finally, since $U$ of a
cell complex is defined by transfinite composition, we can see that $U$
preserves colimits using the fact that transfinite composition commutes with
other colimits. 
\end{proof}

Now recall the definition of pushforwards in the category $\strata$; this can be
extended to $\cellcx$. We construct the pushforward of each stratum in turn and
because of the connectedness property there is only one way this can be done.

\begin{defn}\label{defn:pushforward}
Given any cell complex $(X_n,S_n)_\N$ and any map $g\from X_0\to Z$, the
\emph{pushforward} of $(X_n,S_n)_\N$ along $g$, which we write as
$g_*(X_n,S_n)_\N$, is the following $(Z_n,S_n)_\N$. Firstly, $Z_0=Z$ and
$(Z_0,S_0)$ is the stratum $g_*(X_0,S_0)$. There is then a map
$\bar{(X_0,S_0)}\to\bar{(Z_0,S_0)}$ which we call $g_1$; $(Z_1,S_1)$ is defined
to be the stratum $(g_1)_*(X_1,S_1)$. Continuing in this manner we construct
each stratum of $(Z_n,S_n)_\N$, and we obtain a morphism of cell complexes where
each commutative square in the sequence is a pushout square. It is a trivial
equivalence of two colimits to see that then $Ug_*(X_n,S_n)_\N$ is the pushout
of $U(X_n,S_n)_\N$ along $g$.
\end{defn}

A new construction that we can perform in $\cellcx$, which was not possible in
$\strata$, is that of composition. Suppose we are given two cell complexes, with
the boundary of the second equal to the body of the first. Because of some
smallness conditions satisfied by the maps of $\cat{J}$ in \textbf{Top}, we can
combine the two into a single complex, whose underlying map is the composite in
\textbf{Top} of the underlying maps of the two original complexes. This
construction expresses the intuition that cell complexes can be glued onto one
another to make larger complexes.

To define the composite of two cell complexes in general, we will start with a
simple case. Let $(X_n,S_n)_\N$ be any cell complex and $(Y,T)$ be a stratum,
which we consider as a height one cell complex. Also let $Y=\bar{(X_n,S_n)_\N}$,
so that composition makes sense. We define the composite, which we will write as
$(Z_n,U_n)_\N$, as follows. First, we use the standard result (see, for example,
Proposition~2.4.2 in \cite{Hovey}) that compact spaces are finite relative to
closed $T_1$ inclusions. Each map $X_n\to X_{n+1}$ is a closed $T_1$ inclusion,
and the boundary of every cell is compact---hence for every $t\in T$ there is a
smallest $n_t$ such that $b_t$ factors through $X_{n_t}$. This partitions $T$
into a sequence of sets, $(T_n)_\N$. 

Now let $Z_0=X_0$ and let $U_0=S_0+T_0$. Thereafter, we let each
$Z_n=\bar{(Z_{n-1},U_{n-1})}$ and each $U_n=S_n+T_n$, where all the $b_s$ and
$b_t$ are defined in the obvious way. A straightforward equivalence of two
colimits shows that the underlying map of the composite is the composite of the
underlying maps. It is also important to note that there is a canonical cell
complex morphism $(X_n,S_n)_\N\to(Z_n,U_n)_\N$, with $(1_{X_0},U(Y,T))$ as its
underlying morphism in $\toparr$:
\[\xymatrix{
X_0\ar[rr]^{U(X_n,S_n)_\N}\ar@{=}[d] && Y\ar[d]^{U(Y,T)} \\
X_0\ar[rr]_-{U(Z_n,U_n)_\N}           && \bar{(Y,T)}.
}\]

\begin{defn}\label{defn:composition}
Given any two cell complexes $A$ and $B$, such that the body of $A$ is the
boundary of $B$, we define the \emph{composite} $B*A$ by repeating the above
construction for each stratum in $B$. This gives a sequence of cell complexes
$A_n$, where each $A_n$ is $A$ with the first $n$ strata of $B$ composed onto
it. Because we have all small colimits in $\cellcx$, we can define $B*A$ as the
colimit of this sequence, and because $U$ preserves colimits, this has the
correct composite as its underlying map.
\end{defn}

We note here that this definition of composite can be extended easily to
composing a transfinite sequence of cell complexes, using exactly the same
technique---any ordinal sequence of cell complexes gives an ordinal sequence of
strata which we add one by one, taking the colimit at each limit ordinal.
Another very important observation is the following:

\begin{prop}\label{prop:horizontalcomp} 
Given a pair of cell complex morphisms $\phi\from A\to A'$ and $\psi\from B\to
B'$, if $A$ and $B$ are composable, $A'$ and $B'$ are composable, and
$\bound\psi=\bar{\phi}$, then they give rise to a new cell complex morphism
$\psi*\phi\from B*A\to B'*A'$, such that $U(\psi*\phi)=(\bound\phi,\bar{\psi})$.
 We call it the \emph{horizontal} composite of the two cell complex morphisms. 
\end{prop}
\begin{proof}
Consider the case where $B$ and $B'$ are height one, write $B=(Y,T)$ and
$B'=(Y',T')$, and let $\psi=(g,q)$: we must check that $q$ respects the
partitioning of $T$ and $T'$ into $(T_n)_\N$ and $(T'_n)_\N$; that is, for each
$u\in U$, we want $n_u=n_{q(u)}$. But $n_{q(u)}\leq n_u$ follows from the fact
that $(g,q)$ is a strata morphism, and the pullback lemma ensures that $n_u\leq
n_{q(u)}$. Now an induction argument on the height of $B$ will show that
$\psi*\phi$ is well defined.  
\end{proof}

It is worth noting that we have now defined a double category whose objects are
spaces, whose vertical morphisms are continuous functions, whose horizontal
morphisms are cell complexes and whose 2-cells are cell complex morphisms. 

\begin{prop}\label{prop:image}
The image of the functor $U\from\cellcx\to\toparr$ is exactly the class of
morphisms $\cat{J}$-cell.
\end{prop}
\begin{proof}
Firstly, the definition of $U(X_n,S_n)_\N$ is as a transfinite composite of
pushouts of coproducts of elements of $\cat{J}$, so the image is certainly a
subclass of $\cat{J}$-cell. To show the opposite inclusion, since it is clear
that each element of $\cat{J}$ has a $\cellcx$ structure, we need only check
that the image of $U$ is closed under coproducts, pushouts and transfinite
composites. We have just proved that all colimits exist in $\cellcx$ and are
preserved by $U$, and we have just defined pushforwards of cell complexes. We
have also just defined composites, and as we pointed out these are easily
extended to transfinite composites.
\end{proof}

Finally, there's also a pullback lemma for cell complexes.

\begin{lem}[Pullback Lemma for Cell Complexes]\label{lem:cellcxpullback}
Given any morphism of cell complexes, its image under $U$, when viewed as a
commutative square in $\mathbf{Top}$, is a pullback square.
\end{lem}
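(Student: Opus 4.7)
The plan is to bootstrap from the strata version of the pullback lemma using the ladder structure of a cell complex morphism. Write the source complex as $A=(X_n,S_n)_\N$ and the target as $B=(Y_n,T_n)_\N$, and let the morphism be $(f_n,p_n)_\N$. By connectedness and coherence, for each $n$ the square
\[\xymatrix{
X_n \ar[r]\ar[d]_{f_n} & X_{n+1}\ar[d]^{f_{n+1}}\\
Y_n \ar[r] & Y_{n+1}
}\]
is exactly $U(f_n,p_n)$, hence a pullback by Lemma \ref{lem:stratapullback}. Pasting these pullbacks vertically, every inclusion $X_0\hookrightarrow X_n$ is the pullback of $Y_0\hookrightarrow Y_n$ along $f_n$. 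It remains to pass to the transfinite composite and show the outer square with $\bar{A}$ and $\bar{B}$ is a pullback.

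For this I would mimic the point-set argument from the proof of Lemma \ref{lem:stratapullback}. Each $X_n\to X_{n+1}$ is a closed $T_1$ inclusion (being the pushout of a coproduct of boundary inclusions $\bound\kappa_s\to\bar{\kappa_s}$), and likewise for the $Y_n$; so $X_0$ is a closed subspace of the colimit $\bar{A}$ and $Y_0$ is a closed subspace of $\bar{B}$. Now take a point $a\in\bar{A}$ with image in $Y_0\subseteq\bar{B}$, and let $n$ be the least index with $a\in X_n$. If $n\geq 1$ then $f_n(a)\in Y_0\subseteq Y_{n-1}$, so the pullback property of the $(n{-}1)$-th square produces an element of $X_{n-1}$ mapping to $a$; since $X_{n-1}\hookrightarrow X_n$ is injective this element is $a$ itself, contradicting minimality of $n$. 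Therefore $a\in X_0$, and the converse is trivial, establishing the set-theoretic pullback identity.

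Finally, to compare topologies, both $X_0$ and the topological pullback $Y_0\times_{\bar{B}}\bar{A}$ carry the subspace topology from $\bar{A}$: the former because $X_0$ is a closed subspace of the transfinite composite, the latter because $Y_0\hookrightarrow\bar{B}$ is an embedding. Since they agree as subsets, they agree as spaces. The main obstacle is really the bookkeeping at the colimit step---making sure that the point-set descent argument is compatible with the subspace topologies inherited through the infinite sequence---but this is handled uniformly by the closed $T_1$ inclusion property that already underlies the definition of composition in Section \ref{sec:cellcx}.
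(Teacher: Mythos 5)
Your proposal is correct and follows essentially the same route as the paper: reduce to Lemma \ref{lem:stratapullback} applied finitely many times (your pasting of the squares $U(f_n,p_n)$ is exactly the paper's ``finite number of applications of the pullback lemma for strata''), use the fact that every point of the body lies in some finite stage to descend to $X_0$, and then settle the topology by observing that both candidates carry the subspace topology from $\bar{(X_n,S_n)_\N}$. Your write-up is somewhat more explicit about the minimality induction and the closed $T_1$ inclusion property, but the underlying argument is the same.
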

\begin{proof}
Let $(f_n,p_n)_\N\from(X_n,S_n)_\N\to(Y_n,T_n)_\N$ be a cell complex morphism.
Any map from the one point space to $\bar{(X_n,S_n)_\N}$ must factor through
some $X_n$, because the one point space is compact. Thus, given a point in $Y_0$
and a point in $\bar{(X_n,S_n)_\N}$ with the same image in $\bar{(Y_n,T_n)_\N}$,
a finite number of applications of the pullback lemma for strata will give a
unique point in $X_0$, and this shows that as a set at least, $X_0$ is the
pullback we want it to be. But its open subsets are determined by the subspace
inclusion into $\bar{(X_n,S_n)_\N}$, and this shows that it is indeed the
pullback. 
\end{proof}

\begin{rem}
This result is really a little stronger than stated; it implies that given any
morphism of cell complexes one can remove any finite number of strata from the
beginning and the remaining square is also a pullback in \textbf{Top}---simply
because it is also a morphism of cell complexes. This fact will be vital in
Section~\ref{sec:comonadicity}.
\end{rem}

\section{The adjunction}\label{sec:adjunction}

We now examine some more properties of the category of cell complexes; they will
let us see that it is a category of left maps, and in fact one with a useful
universal property with respect to $\cat{J}$. First we show that there is a
right adjoint to $U$, which makes an adjunction that will turn out to be
comonadic. In order to construct this right adjoint $K$, we will first restrict
our attention to $\strata$, and then consider the whole of $\cellcx$. It is
worth bearing in mind that the construction in this section is very closely
analogous to the small object argument; the first proposition gives a single
step, the second proposition iterates it. 

\begin{prop}
The functor $U\from\strata\to\toparr$ has a right adjoint $K_1$.
\end{prop}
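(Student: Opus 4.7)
The plan is to construct $K_1$ explicitly by packaging, for each arrow $f\from A\to B$ in $\toparr$, all lifting problems against $f$ from $\cat{J}$ as the cells of a single stratum on $A$. This mirrors exactly one step of Garner's small object argument, so the right adjoint really is the obvious categorical refinement of that construction.

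First I would set $K_1 f = (A, S_f)$, where $S_f$ is the set of commutative squares
\[\xymatrix{
\bound\kappa \ar[r]^{t}\ar[d]_{U\kappa} & A\ar[d]^f \\
\bar{\kappa} \ar[r]_{\beta} & B
}\]
with $\kappa\in\cat{J}$; for each such square $s=(\kappa,t,\beta)$ take $\kappa_s=\kappa$ and $b_s=t$. On a morphism $(a,b)\from f\to f'$ of $\toparr$, $K_1(a,b)$ has continuous part $a$ and set part given by postcomposing each square with $(a,b)$; this visibly respects the strata-morphism conditions and is functorial. The counit $\epsilon_f\from UK_1 f\to f$ then has domain component $1_A$, and codomain component assembled from the pushout defining $\bar{(A,S_f)}$ by gluing $f$ with the bottom edges $\beta$ of the squares in $S_f$---the required compatibility on each $\bound\kappa_s$ being exactly the commutativity of the square $s$.

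The main step is the universal property of $\epsilon_f$: given any stratum $(X,T)$ and any square $(u,v)\from U(X,T)\to f$, I must produce a unique strata morphism $(u,p)\from(X,T)\to K_1 f$ with $\epsilon_f\circ U(u,p)=(u,v)$. The continuous part is forced to be $u$. For each cell $t\in T$, the pair consisting of $u\circ b_t\from\bound\kappa_t\to A$ and the composite $\bar{\kappa_t}\to\bar{(X,T)}\xrightarrow{v}B$ is a lifting square against $f$, which I take as $p(t)$; then $\kappa_{p(t)}=\kappa_t$ and $b_{p(t)}=u\circ b_t$ hold by construction. A short diagram chase through the pushout defining $\bar{(X,T)}$ verifies $\epsilon_f\circ U(u,p)=(u,v)$ by checking agreement on the $X$-component (where both sides are $f\circ u=v\circ(X\to\bar{(X,T)})$) and on each $\bar{\kappa_t}$-component (where both sides are $\beta_{p(t)}=v\circ(\bar{\kappa_t}\to\bar{(X,T)})$). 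Uniqueness of $p$ is forced by the uniqueness of the codomain map out of that same pushout. Naturality of this bijection in both variables is routine, giving $U\dashv K_1$.

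I do not expect a genuine obstacle here; the only subtleties are to ensure $S_f$ is an honest set rather than a proper class---which it is, since $\cat{J}$ is small and each $\bound\kappa$ is a small space---and to keep the pushout bookkeeping straight so that both triangles of the adjunction bijection commute.
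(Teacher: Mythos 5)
Your construction is exactly the paper's: $K_1f$ is the stratum on $A$ whose cells are all commutative squares from generating cofibrations to $f$, the counit is assembled via the pushout defining the body, and the adjunction bijection comes from the cell-by-cell correspondence between cells of $(X,T)$ and squares against $f$. The only (harmless) difference is in the uniqueness step, where you argue directly from the pushout's universal property while the paper appeals to $E_1f$ being epic; both are fine.
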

\begin{proof}
Let $f\from A\to B$ be any continuous function between topological spaces; in
other words, any object of $\toparr$. Let $S$ be the set of all morphisms to $f$
of the form
\[\xymatrix{
\bound j\ar[d]_{Uj}\ar[r] & A\ar[d]^f \\
\bar{j}\ar[r] & B
}\]
in $\toparr$, for any $j\in\cat{J}$. Notice that any element $s\in S$ comes with
a canonical choice of $\kappa_s\in\cat{J}$ and $b_s\from\bound\kappa_s\to A$.
This means that $(A,S)$ is a stratum. It also comes with a canonical morphism
$(1_A,E_1f)\from U(A,S)\to f$ in $\toparr$, whose codomain part
$E_1f\from\bar{(A,S)}\to B$ is determined by the pushout property of
$\bar{(A,S)}$, using $f$ and the codomain part of each $s\in S$. We claim that
we have just constructed $K_1f$, and that $(1_A,E_1f)$ is the counit of the
adjunction.

Suppose $(X,T)$ is any stratum and $(g,h)\from U(X,T)\to f$ a morphism of
$\toparr$. Because of the pushout definition of $\bar{(X,T)}$, the function $h$
is determined by $g$ and a morphism $h_t\from U\kappa_t\to f$ in $\toparr$ for
each $t\in T$; this is all the information that makes up $(g,h)$. But each $h_t$
gives an element $s\in S$, so this information also exactly defines a morphism
of strata, $(g,t\mapsto h_t)\from(X,T)\to(A,S)$, and factors $(g,h)$ through
$(1_A,E_1f)$. The factorisation is unique because $E_1f$ is epic; we have
demonstrated the correspondence necessary for $K_1$ to be the right adjoint of
$U$. 
\end{proof}

The construction in this proof of $E_1f$ and $UK_1f$ is a functorial
factorisation of $f$---exactly the factorisation produced by the first step of
either small object argument (Garner's or Quillen's). In the next proposition we
extend the right adjoint to $\cellcx$, iterating in precisely the manner of
Garner's small object argument. 

\begin{prop}\label{prop:adjunction}
The functor $U\from\cellcx\to\toparr$ has a right adjoint $K$.
\end{prop}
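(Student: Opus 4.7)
The plan is to construct $Kf$ directly as a cell complex by iterating the single-step construction of $K_1$, modified at each level to enforce properness, and then to verify the universal property by inductively applying the Pullback Lemma for strata (Lemma~\ref{lem:stratapullback}).

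Given $f\from A\to B$, I would set $Y_0=A$ with structural map $g_0=f$, and inductively, once $Y_n$ and $g_n\from Y_n\to B$ have been defined, take $T_n$ to be the set of all commutative squares from $j\in\cat{J}$ to $g_n$ whose top arrow $\bound j\to Y_n$ does not factor through the canonical inclusion $Y_{n-1}\hookrightarrow Y_n$ (with no such restriction when $n=0$). This yields a stratum $(Y_n,T_n)$; let $Y_{n+1}$ be its body and $g_{n+1}\from Y_{n+1}\to B$ the map induced by $g_n$ together with the codomain components of the cells. The sequence $(Y_n,T_n)_\N$ is connected by construction, and proper because any factorisation of a cell boundary through a lower $Y_m$ would in particular factor through $Y_{n-1}$. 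The maps $g_n$ assemble into $\epsilon_f\from\bar{Kf}\to B$ on the transfinite composite, giving the candidate counit $(1_A,\epsilon_f)\from UKf\to f$.

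For the universal property, given a cell complex $A'=(X_n,S_n)_\N$ and a morphism $\alpha=(f_0,\alpha_{\text{top}})\from UA'\to f$, I would construct $\tilde\alpha=(\phi_n,\pi_n)_\N\from A'\to Kf$ inductively, starting with $\phi_0=f_0$. The requirement that $\tilde\alpha$ factor $\alpha$ through the counit forces, for each $s\in S_n$, that $\pi_n(s)$ be the cell of $Kf$ whose top arrow is $\phi_n\circ b_s$ and whose bottom arrow is the composite $\bar\kappa_s\to\bar{A'}\to B$ obtained from $\alpha_{\text{top}}$; this uniquely determines $\pi_n$, and $\phi_{n+1}$ is then forced by coherence as $\bar{(\phi_n,\pi_n)}$. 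Both existence and uniqueness of $\tilde\alpha$ reduce to this choice being well-defined at every level, and compatibility with the counit is then immediate.

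The main obstacle is verifying that the square used to define $\pi_n(s)$ is genuinely a member of $T_n$ rather than one of the squares excluded at level $n$; equivalently, that $\phi_n\circ b_s$ does not factor through $Y_{n-1}$. If it did, applying Lemma~\ref{lem:stratapullback} to the previously constructed strata morphism $(\phi_{n-1},\pi_{n-1})$ would exhibit the square
\[\xymatrix{
X_{n-1}\ar[r]\ar[d]_{\phi_{n-1}} & X_n\ar[d]^{\phi_n} \\
Y_{n-1}\ar[r] & Y_n
}\]
as a pullback, producing a lift of $b_s$ through $X_{n-1}$ and contradicting the properness of the source $A'$. This inductive interplay between the layer-by-layer construction of $\tilde\alpha$ and the Pullback Lemma is the crucial ingredient that makes the whole construction consistent, and it is also the point where the properness condition on cell complexes pays its way.
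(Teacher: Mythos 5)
Your proposal is correct and follows essentially the same route as the paper: both construct $Kf$ by iterating the one-step free stratum construction while discarding at each level the cells whose boundary maps factor through the previous stage, and both establish the universal property by building the comparison morphism stratum by stratum and invoking the Pullback Lemma for strata to show that properness of the source forces the induced cells to land in the retained (proper) part of $Kf$. The only cosmetic difference is that the paper phrases uniqueness of the factorisation via the observation that the counit's codomain component $Ef$ is epimorphic, whereas you derive it from the forced choice of $\pi_n$ at each level; these amount to the same thing.
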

\begin{proof}
In the proof of the previous proposition, we constructed a right adjoint to
$K_1\from\toparr\to\strata$ for $U$ in the case of strata. We also defined a
functor $E_1\from\toparr\to\toparr$ which appeared in the counit of the
adjunction and which will prove rather useful. Again, consider $f\from A\to B$,
any object of $\toparr$. Apply $K_1$ to $f$ to obtain the stratum $(A,S)$ and
the function $E_1f\from\bar{(A,S)}\to B$. Then apply $K_1$ again, this time to
$E_1f$, to get another stratum whose boundary is $\bar{(A,S)}$. This also gives
another new function $E_1E_1f$, to which we apply $K_1$ in turn. Continuing this
process gives a connected sequence of strata, which would be a good candidate
for $Kf$, except for one problem: the sequence is not proper.

A similar approach does work, however (with a little bit more work) if we
deliberately omit the improper cells at each stage. Let $A_0=A$ and $S_0=S$ as
defined above; there are no improper cells in the first stratum, so $(A_0,S_0)$
is the first stratum of $Kf$. We let $A_1=\bar{(A_0,S_0)}$ and define $S_1$ to
be the set of morphisms $Uj\to E_1f$ in $\toparr$, for any $j\in\cat{J}$,
satisfying the additional condition that their boundary maps do not factor
through $A_0$. As before, $(A_1,S_1)$ is clearly a stratum and we get a new
morphism $E_2f\from\bar{(A_1,S_1)}\to B$. We can continue in this fashion,
defining $A_n$ to be $\bar{(A_{n-1},S_{n-1})}$ and $S_n$ to be the set of
morphisms $Uj\to E_nf$ whose boundary maps do not factor through $A_{n-1}$. This
produces a connected sequence of strata which is this time proper by
construction. Furthermore, there is an unique map $Ef\from\bar{(A_n,S_n)_\N}\to
B$ which commutes with all the $E_nf$.
\[\xymatrix{
A_0\ar@/_1pc/[rrrd]_f\ar[r] & A_1\ar@/_0.3pc/[rrd]_(0.3){E_1f}\ar[r] &
A_2\ar[rd]^(0.6){E_2f}\ar[r] & \ldots\ar[r] &
\bar{(A_n,S_n)_\N}\ar@/^0.5pc/[ld]^{Ef} \\
                            &     &     &     B &
}\]

Let $(X_n,T_n)_\N$ be any cell complex and $(g,h)\from U(X_n,T_n)_\N\to f$ a map
of $\toparr$. The first thing we note is that the map $h$ corresponds to an
infinite sequence of maps, one from each $X_n$; we call this $h_n\from X_n\to
B$. Now $(g,h_1)$ is a map in $\toparr$ from $U(X_0,T_0)$ to $f$, so by the
adjunction between $\strata$ and $\toparr$ we obtain a strata morphism
$(g,p_0)\from(X_0,T_0)\to(A_0,S_0)$. Say $g_0=g$, and $g_1=\bar{(g_0,p_0)}$.
Then $(g_1,h_2)$ is a map in $\toparr$ from $U(X_1,T_1)$ to $E_1f$; this induces
a strata morphism $(g_1,p_1)\from(X_1,T_1)\to(A_1,S_1)$---use the same argument
as to construct the morphism $(X_1,T_1)\to K_1E_1f$, and note that each of the
cells in the image is proper. If some cell $t\in T_1$ were to have $b_{p_1(t)}$
that could factor through $X_0$, then by the Pullback Lemma $b_t$ would factor
through $X_0$ which is impossible. 

Now we repeat the construction of $(g_1,p_1)$ to define a sequence of strata
morphisms $(g_n,p_n)_\N\from(X_n,T_n)_\N\to(A_n,S_n)_\N$ which is automatically
coherent. Since $E_nf\circ g_n=h_n$ for each $n$, we have
$Ef\circ\bar{(g_n,p_n)_\N}=h$ and we have factored $(g,h)$ through $(1_A,Ef)$.
The factorisation is unique, again because $Ef$ can be seen to be epimorphic.
This demonstrates the correspondence that makes $K$ the right adjoint of $U$.
\end{proof}

It is useful to note that the construction we made in the first paragraph of the
proof, before we insisted that the sequence be proper, is effectively Quillen's
small object argument. When we altered the construction to ensure a proper
sequence, we missed out the superfluous cells; the distinction between the
`improper' sequence construction and the proper sequence construction that
follows it is precisely the distinction between Quillen's and Garner's small
object arguments.

\section{Comonadicity}\label{sec:comonadicity}

Two lemmas will be sufficient for us to prove that the adjunction is comonadic;
we'll use the standard result known as Beck's Monadicity Theorem, which can be
found in \cite{MacLane} and many other places besides.

\begin{lem}
The category $\cellcx$ has all equalisers, and the functor $U$ preserves all
equalisers.
\end{lem}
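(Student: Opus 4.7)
The plan is to construct equalisers stratum-by-stratum, mirroring the colimit construction in Proposition \ref{prop:colimitscellcx}, after first handling the case of $\strata$. Given two strata morphisms $(f,p),(g,q)\from(X,S)\to(Y,T)$, the candidate equaliser is $(X',S')$ where $X'\subseteq X$ is the \textbf{Top}-equaliser of $f$ and $g$, and $S'=\{s\in S : p(s)=q(s)\}$. For each $s\in S'$ we have $f\circ b_s=b_{p(s)}=b_{q(s)}=g\circ b_s$, so $b_s$ factors uniquely through $X'$, producing a well-defined stratum. The universal property is then routine: any equalising strata morphism $(h,r)\from(Z,V)\to(X,S)$ has $h$ landing in $X'$ and $r$ landing in $S'$, so it factors uniquely through $(X',S')$.

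To see $U$ preserves this equaliser, I would appeal to the Pullback Lemma for strata. Every point of $\bar{(X,S)}$ lies either in $X$ or in the interior $\bar{\kappa_s}\setminus\bound\kappa_s$ of exactly one cell; such a point is equalised by $\bar{(f,p)}$ and $\bar{(g,q)}$ precisely when it lies in $X'$ or in the interior of a cell with $p(s)=q(s)$. So as a set the \textbf{Top}-equaliser of $\bar{(f,p)}$ and $\bar{(g,q)}$ is the underlying set of $\bar{(X',S')}$. Since $\bar{(X',S')}$ is a pushout of the subspace inclusions $X'\hookrightarrow X$ and $\bound\kappa_s\hookrightarrow\bar{\kappa_s}$ (for $s\in S'$), it embeds as a subspace of $\bar{(X,S)}$, and its topology therefore agrees with the equaliser subspace topology.

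For $\cellcx$, given cell complex morphisms $(f_n,p_n)_\N,(g_n,q_n)_\N\from(X_n,S_n)_\N\to(Y_n,T_n)_\N$, I would define the equaliser stratum-wise as $(X'_n,S'_n)_\N$. Connectedness is then the key payoff of preservation at the strata level: $\bar{(X'_n,S'_n)}$ is the equaliser of $\bar{(f_n,p_n)}=f_{n+1}$ and $\bar{(g_n,q_n)}=g_{n+1}$, which is exactly $X'_{n+1}$. Properness is inherited along the inclusions $S'_n\hookrightarrow S_n$ and $X'_{n-1}\hookrightarrow X_{n-1}$, and coherence of the factorisation from any equalising cone is forced by uniqueness at each stratum.

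The main obstacle will be showing that $U\from\cellcx\to\toparr$ preserves the equaliser, since one must compare the \textbf{Top}-equaliser of the two transfinite composites $\bar{(X_n,S_n)_\N}\rightrightarrows\bar{(Y_n,T_n)_\N}$ with the transfinite composite $\bar{(X'_n,S'_n)_\N}$—equalisers do not in general commute with sequential colimits. Here I would use the same input that drove Definition \ref{defn:composition} and Lemma \ref{lem:cellcxpullback}: each $X_n\hookrightarrow X_{n+1}$ is a closed $T_1$ inclusion, so any point of $\bar{(X_n,S_n)_\N}$ lies in some $X_n$ and is equalised there iff it lies in $X'_n$. The pointwise equaliser is thus $\bigcup_n X'_n$, and both the subspace topology from $\bar{(X_n,S_n)_\N}$ and the colimit topology on $\bar{(X'_n,S'_n)_\N}$ are determined by the closed filtration $X'_0\hookrightarrow X'_1\hookrightarrow\ldots$, so they coincide.
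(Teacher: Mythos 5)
Your proposal is correct and follows essentially the same route as the paper: equalisers are computed first in $\strata$ (boundary equaliser in $\textbf{Top}$, cell-set equaliser in $\textbf{Set}$), preservation by $U$ is checked pointwise via the cell decomposition with the topology handled by subspace inclusion, and the extension to $\cellcx$ is stratum-wise with properness inherited from the subcomplex inclusion and the point-set comparison of bodies justified by compactness of the one-point space. Your explicit remark that equalisers need not commute with sequential colimits, and that the closed $T_1$ filtration is what saves the day, is a slightly more careful articulation of the step the paper dispatches with the same compactness observation.
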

\begin{proof}
We start by showing this result for $\strata$. Let $(f,p)$ and $(g,q)$ be two
strata morphisms from $(X,S)$ to $(Y,T)$. Now let $e\from E\to X$ be the
equaliser of $f$ and $g$ in $\textbf{Top}$, and let $r\from L\to S$ be the
equaliser of $p$ and $q$ in \textbf{Set}. We claim that
$(e,r)\from(E,L)\to(X,S)$ is the equaliser we are looking for in $\strata$.
Firstly, we must check it is actually a stratum; given $l\in L$, because
$p(r(l))=q(r(l))$ we have $f\circ b_{r(l)}=g\circ b_{r(l)}$ so there's a unique
map $\bound\kappa_{r(l)}\to E$ which we use to define $b_l$. This definition of
$\kappa_l$ and $b_l$ makes $(e,r)$ automatically a strata morphism; we must just
check the limit property. Given a stratum morphism $(h,m)\from(Z,W)\to(X,S)$
which also equalises $(f,p)$ and $(g,q)$, we get a unique pair
$(k,n)\from(Z,W)\to(E,L)$, shown by the dotted arrow:
\[\xymatrix{
(Z,W)\ar@{.>}[d]_{(k,n)}\ar[dr]^{(h,m)} & & \\
(E,L)\ar[r]_{(e,r)} & (X,S)\ar@<1ex>[r]^{(f,p)}\ar@<-1ex>[r]_{(g,q)} & (Y,T)
}\] 
This $(k,n)$ is a strata morphism: it is clear that for any $w\in W$,
$\kappa_w=\kappa_{n(w)}$ and $e\circ k\circ b_w=h\circ b_w=b_{m(w)}=e\circ
b_n(w)$, which implies $k\circ b_w=b_{n(w)}$ because $e$ is monic. Furthermore,
$U$ preserves this equaliser: consider its image under $U$---firstly, the
boundary $E$ is by definition the equaliser we want. Secondly, a point in
$\bar{(X,S)}$ has the same image under $f$ and $g$ iff it is either in $E\subset
X$ or in a cell $s\in S$ such that $p(s)=q(s)$; hence as a point set,
$\bar{(E,U)}$ is the equaliser. As a space, its topology is determined by it
being a subspace of $\bar{(X,S)}$, so we are done.

To extend to $\cellcx$, we use a very similar argument to that in Proposition
\ref{prop:colimitscellcx}; we claim the equaliser of a pair of cell complex
morphisms is given as the sequence of equalisers of strata. This sequence is
connected, and the sequence of morphisms is coherent, by exactly the same
reasoning as in Proposition \ref{prop:colimitscellcx}. To show it is proper is
in fact much easier here, because the equaliser is a \emph{subcomplex} of the
first cell complex---the equaliser map is a sequence of strata inclusions. We
also check the limit property; this follows from the same argument as in
Proposition \ref{prop:colimitscellcx}. Finally, consider the image under $U$.
Using the result for $\strata$, the boundary of each stratum in the equaliser is
the correct equaliser in $\textbf{Top}$. Then, because every point in the body
appears in the boundary of some stratum (since the one point space is compact)
the image under $U$ is correct as a function of sets. It then follows it is
correct as a continuous function between spaces, again by considering subspace
inclusions which determine its topology.
\end{proof}

\begin{lem}\label{lem:conservative}
The functor $U$ is conservative.
\end{lem}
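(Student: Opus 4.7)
The plan is, given a cell complex morphism $\phi = (f_n,p_n)_\N$ whose image $U\phi = (f_0, \bar\phi)$ is an isomorphism of arrows (so both $f_0$ and $\bar\phi$ are homeomorphisms), to construct a componentwise inverse by showing that each $(f_n,p_n)$ is already a strata isomorphism. Coherence of the inverse sequence will then be automatic from uniqueness of the pushout maps, so the bulk of the work is to verify (i) each $p_n\from S_n\to T_n$ is a bijection and (ii) each $f_n$ is a homeomorphism.

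For (i), I will exploit the geometric content of the pullback lemmas together with the observation used in the proof of Lemma \ref{lem:stratapullback}: any point of $\bar{(X_n,S_n)_\N}$ not lying in $X_0$ sits in the interior of a unique cell, and under $\bar\phi$ it is carried to the interior of the corresponding cell $p_n(s)$ via the canonical identification of the two copies of $\bar{\kappa_s}\setminus\bound\kappa_s$. Injectivity of $p_n$ then follows because $\bar\phi$ is injective and distinct cells have disjoint interiors; surjectivity follows because the interior of a cell in $T_n$ is disjoint from $Y_0$ and from the interior of every other cell of every stratum, so the preimage of such a point under the surjection $\bar\phi$ must come from a cell $s\in S_n$ with $p_n(s) = t$.

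For (ii), I will induct on $n$. The base $n = 0$ is given. For the step, granted $f_{n-1}$ is a homeomorphism and $p_{n-1}$ is a bijection, the strata morphism $(f_{n-1},p_{n-1})$ is a strata isomorphism (its inverse being defined levelwise by $f_{n-1}^{-1}$ and $p_{n-1}^{-1}$, with the compatibility $f_{n-1}^{-1}\circ b_t = b_{p_{n-1}^{-1}(t)}$ immediate), so $f_n = \bar{(f_{n-1},p_{n-1})}$ arises from a pushout of homeomorphisms and is again a homeomorphism. The main obstacle is justifying the claim that "the interior of a cell" is a well-defined subset of the entire body and that $\bar\phi$ respects the induced cellular partition: this requires a small induction through the strata using Lemma \ref{lem:stratapullback}, together with the compactness argument of Lemma \ref{lem:cellcxpullback} to pass to the transfinite colimit. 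Once this cellular decomposition is in hand, the strata- and cell-complex-level bookkeeping is routine.
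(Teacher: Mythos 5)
Your argument is correct, but it is organised differently from the paper's. The paper first proves conservativity for $\strata$ by a universal-property argument: the inverse $(g,h)$ of $U(f,p)$ in $\toparr$ is, via the pushout presentation of $\bar{(X,S)}$, determined by $g$ together with a map $h_t\from U\kappa_t\to U(X,S)$ for each $t\in T$, and each such $h_t$ selects a cell $h'(t)\in S$, which equips $(g,h)$ with a strata-morphism structure inverse to $(f,p)$; it then passes to $\cellcx$ by invoking the pullback lemma for cell complexes to conclude in one line that every $f_n$ is an isomorphism (each being a pullback of the isomorphism $\bar{\phi}$), and applies the strata result levelwise. You instead establish bijectivity of each $p_n$ directly from the cellular partition of the body --- cell interiors pairwise disjoint, disjoint from $X_0$, and carried by $\bar{\phi}$ to the interiors of the corresponding cells --- and recover the $f_n$ by an upward induction through the pushouts rather than by pulling back. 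The trade-off is that the paper's route reuses Lemmas \ref{lem:stratapullback} and \ref{lem:cellcxpullback} as black boxes and stays close to universal properties, whereas your route must re-derive, inside this proof, essentially the same point-set facts those lemmas encapsulate (and, as you note, extend them through the colimit via compactness); in exchange your argument makes the geometric content visible and isolates exactly where monicity of the maps in $\cat{J}$ is used --- the disjointness of cell interiors --- which is the same hypothesis the paper flags in Section \ref{sec:final} as the obstacle to generalisation. One small point worth making explicit in your step (ii): once $(f_{n-1},p_{n-1})$ is a strata isomorphism, $f_n=\bar{(f_{n-1},p_{n-1})}$ is an isomorphism simply because the functor $\bar{(\cdot)}$ preserves isomorphisms; you do not need to argue separately that a pushout of homeomorphisms is a homeomorphism.
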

\begin{proof}
As usual, we prove this for $\strata$ and then extend the result to $\cellcx$.
Let $(f,p)\from(X,S)\to(Y,T)$ be a strata morphism and assume that $U(f,p)$ is
an isomorphism in $\toparr$. We consider the inverse of $U(f,p)$ in $\toparr$,
which we will write $(g,h)$. The function $h$ is determined by $g$ and a
morphism $h_t\from\kappa_t\to U(X,S)$ for each $t\in T$; and $U(f,p)\circ h_t$
is the canonical inclusion of $\kappa_t$ into $U(Y,T)$. This means that each
$h_t$ makes a choice of $h'(t)\in S$ such that $p(h'(t))=t$ and $h'(p(s))=s$.
This shows that $(g,h)$ has a strata morphism structure given by $(g,h')$, and
this strata morphism is an inverse to $(f,p)$, showing that it is an
isomorphism, and hence that $U\from\strata\to\toparr$ is conservative.

Consider a morphism of cell complexes,
$(f_n,p_n)_\N\from(X_n,S_n)_\N\to(Y_n,T_n)_\N$, and assume its image under $U$
is an isomorphism. This immediately shows that $f_0$ is an isomorphism. Using
the remark following the pullback lemma for cell complexes, each $f_n$ is a
pullback of $\bar{(f_n,p_n)_\N}$, and since the pullback of an isomorphism is an
isomorphism, all the $f_n$ are isomorphisms. Now use the result on $\strata$ to
see that all the strata morphisms $(f_n,p_n)$ are individually isomorphisms;
hence, $(f_n,p_n)_\N$ is an isomorphism and we are done.
\end{proof}

\begin{cor}
Since $U$ is conservative, $\cellcx$ has and $U$ preserves all equalisers, the
dual of Beck's monadicity theorem implies that the adjunction between $U$ and
$K$ is comonadic.
\end{cor}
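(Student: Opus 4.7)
The plan is to invoke Beck's comonadicity theorem (the formal dual of Beck's monadicity theorem), which states that a functor $L \from \cat{A}\to\cat{B}$ with right adjoint $R$ is comonadic provided (i) $L$ is conservative, and (ii) $\cat{A}$ has equalisers of $L$-split pairs and $L$ preserves them. I would apply this with $L=U$ and $R=K$.

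First I would check that the adjunction $U\dashv K$ itself is in place; this is exactly what Proposition \ref{prop:adjunction} constructed. Then condition (i) is precisely the statement of the second lemma immediately preceding the corollary, so no further argument is needed there. For condition (ii), I would observe that the first of the two preceding lemmas establishes something strictly stronger: $\cellcx$ has \emph{all} equalisers and $U$ preserves them. In particular, it has equalisers of $U$-split pairs and $U$ preserves them, so (ii) holds a fortiori.

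With the three hypotheses in hand, Beck's theorem yields at once that the comparison functor $\cellcx\to(\toparr)_{UK}$, sending a cell complex $A$ to the pair $(UA,U\eta_A)$ where $\eta$ denotes the unit of $U\dashv K$, is an equivalence of categories. This is exactly the assertion that the adjunction is comonadic.

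There is no real obstacle here beyond being careful about the variance: we want the left adjoint to be comonadic, which is the dual form of Beck's theorem. Since both nontrivial hypotheses have already been discharged by the preceding two lemmas, and the adjunction itself was built in Section \ref{sec:adjunction}, the corollary is immediate.
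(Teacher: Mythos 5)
Your proposal is correct and matches the paper exactly: the corollary is proved by citing the two preceding lemmas (conservativity of $U$, and existence and preservation of all equalisers) and invoking the dual of Beck's monadicity theorem, just as you do. Your additional observation that having all equalisers is stronger than needed ($U$-split pairs would suffice) is accurate but does not change the argument.
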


\section{The awfs}\label{sec:awfs}

At this stage it follows directly from a result of Garner (the dual of Theorem
4.9 in \cite{Functorial}) that, since we have a comonad $UK$ whose category of
coalgebras admits a composition law, it must appear as part of an awfs whose
monad part is given by the counit. However, in the interests of clarity we will
spend some time proving this fact explicitly.

From now on we will observe the notational convention of writing morphisms in
$\toparr$ and $\cellcx$ vertically (as squares where the source and target
morphisms are horizontal). This seems to make the diagrams clearer, and is
appropriate if one considers the double category point of view. In some of the
diagrams we will also use a notational shorthand where instead of explicitly
writing a map and its factorisation, we draw arrows going to and from the middle
of an arrow to mean morphisms to and from the central object of that arrow's
factorisation. Thus a left map will be drawn as
\[\xymatrix{A\ar[rr]_{f} & & B\ar@/_1pc/[l]_{\alpha}}\]
and the image of a morphism $(a,b)$ in $\toparr$ under the factorisation will be
drawn as
\[\xymatrix@C=1.3cm{
A\ar[d]_a\ar[rr]^f & \ar[d]^{\bar{K(a,b)}} & B\ar[d]^b \\
C\ar[rr]_g & & D.
}\]
Arrows to and from the one quarter point or three quarters point of an arrow
mean the obvious thing, where the left or right part of a factorisation has been
factorised again.

\begin{prop}\label{prop:monad}
The endofunctor $E$ on $\toparr$, which was defined in
Proposition~\ref{prop:adjunction}, is a monad. 
\end{prop}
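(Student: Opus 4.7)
The plan is to construct the unit and multiplication of $E$ explicitly using the functorial factorisation $f = Ef \circ UKf$ and the composition of cell complexes from Definition~\ref{defn:composition}, then reduce every verification to the uniqueness clause of the adjunction $U\dashv K$. For the unit, the factorisation forces $\eta_f\from f \to Ef$ to have codomain component $1_B$ and domain component $UKf$, so I set $\eta_f := (UKf, 1_B)$; naturality in $f$ is immediate from the fact that the domain component of any $UK(a,b)$ is $a$. For the multiplication, note that $Kf$ and $KEf$ are composable (the body of $Kf$ is the boundary of $KEf$), so the composite $KEf * Kf$ is defined, with underlying map $UKEf \circ UKf$. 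Because $EEf \circ UKEf \circ UKf = Ef \circ UKf = f$, the pair $(1_A, EEf)$ defines a morphism $U(KEf * Kf) \to f$ in $\toparr$; the adjunction converts it into a unique cell complex morphism $\phi_f\from KEf * Kf \to Kf$, and I set $\mu_f := (\bar{\phi_f}, 1_B)$. The identity $Ef \circ \bar{\phi_f} = EEf$ needed for $\mu_f$ to be a genuine morphism in $\toparr$ is the codomain component of $\vec{\epsilon}_f \circ U\phi_f = (1_A, EEf)$.

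Naturality of $\mu$ and the associativity $\mu \circ \mu E = \mu \circ E\mu$ both reduce to comparing pairs of cell complex morphisms into $Kf$ (respectively $Kf'$) via uniqueness in the adjunction, using the horizontal composition formula $U(\psi * \phi) = (\bound \phi, \bar \psi)$ and the associativity of cell complex composition applied to the triple composite $KEEf * KEf * Kf$. For the left unit law $\mu_f \circ \eta_{Ef} = 1_{Ef}$, which amounts to $\bar{\phi_f} \circ UKEf = 1_{\bar{Kf}}$, I would use the canonical cell complex morphism $\iota_f\from Kf \to KEf * Kf$ with $U\iota_f = (1_A, UKEf)$ described after Definition~\ref{defn:composition}: the composite $\phi_f \circ \iota_f$ is a cell complex endomorphism of $Kf$ whose image under the counit is $(1_A, EEf \circ UKEf) = (1_A, Ef) = \vec{\epsilon}_f$, so $\phi_f \circ \iota_f = 1_{Kf}$ by uniqueness, giving the desired identity on body components.

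\textbf{Main obstacle.} The hardest axiom will be the right unit law $\mu_f \circ E\eta_f = 1_{Ef}$, i.e., $\bar{\phi_f} \circ \bar{K\eta_f} = 1_{\bar{Kf}}$. The difficulty is that $\bar{K\eta_f}$ is only specified indirectly, as the body component of the functorially-induced morphism $K\eta_f\from Kf \to KEf$, and (unlike $UKEf$) does not come from a canonical morphism into $KEf * Kf$. The plan is to construct a cell complex morphism $\gamma\from Kf \to KEf * Kf$ with $U\gamma = (1_A, \bar{K\eta_f})$ by a stratum-by-stratum argument: at stratum $n$, $\gamma$ should send each $s \in S_n$ to the image cell $p_n(s)$ under $K\eta_f$, which must be identified with a cell inserted into the composite $KEf * Kf$ at an appropriate stratum via the smallness-based partition from Definition~\ref{defn:composition}. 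That this assignment is well-defined and coherent, and in particular yields a cell complex morphism whose body component is precisely $\bar{K\eta_f}$, is a careful combinatorial verification that exploits the interplay between the strata maps $(K\eta_f)_n$ and the way cells of $KEf$ are redistributed across the strata of the composite. Granted this $\gamma$, the right unit law follows from the by-now familiar uniqueness argument: $\vec{\epsilon}_f \circ U(\phi_f \circ \gamma) = (1_A, Ef \circ \bar{\phi_f} \circ \bar{K\eta_f}) = (1_A, EEf \circ \bar{K\eta_f}) = (1_A, Ef) = \vec{\epsilon}_f$, so $\phi_f \circ \gamma = 1_{Kf}$ and hence $\bar{\phi_f} \circ \bar{K\eta_f} = 1$.
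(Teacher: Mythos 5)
Your construction of $\vec{\eta}$ and $\vec{\mu}$, and your strategy of reducing naturality, the unit laws and associativity to the uniqueness clause of the adjunction $U\dashv K$, is exactly the paper's proof. The only place you diverge is in treating the law $\vec{\mu}\circ E\vec{\eta}=1$ as a hard combinatorial problem: the morphism $\gamma\from Kf\to KEf*Kf$ with $U\gamma=(1_A,\bar{K\vec{\eta}_f})$ that you propose to build by a stratum-by-stratum redistribution argument is already available as a horizontal composite. Take $\phi_0$ to be the canonical morphism from the trivial (height $0$) complex on $A$ into $Kf$, so that $\bound\phi_0=1_A$ and $\bar{\phi_0}=UKf=\bound K\vec{\eta}_f$; then the horizontal composite $K\vec{\eta}_f*\phi_0$ (in the sense established after Definition~\ref{defn:composition}) is a morphism from $Kf*\mathrm{triv}(A)$, which is canonically $Kf$, to $KEf*Kf$, and its image under $U$ is $(\bound\phi_0,\bar{K\vec{\eta}_f})=(1_A,\bar{K\vec{\eta}_f})$, which is precisely your $\gamma$. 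The paper dispatches this law in one line for essentially this reason, and your proposed "careful combinatorial verification" would amount to re-deriving the horizontal composition of cell complex morphisms in this special case. With that observation your proof is complete and coincides with the paper's.
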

\begin{proof}
In Definition~\ref{defn:composition} and Proposition~\ref{prop:horizontalcomp}
we showed how cell complex structures can be composed; this will provide us with
the multiplication for the monad $E$. Firstly, the unit $\vec\eta\from 1\to E$
is given by $\vec\eta_f=(UKf,1_B)$ where $f\from A\to B$. Now, $KEf$ is a cell
complex which can be composed with $Kf$; we write the composite $KEf*Kf$. There
is a morphism in $\toparr$ given by $(1_A,EEf)\from U(KEf*Kf)\to f$, hence by
the adjunction there is a cell complex morphism $\phi\from KEf*Kf\to Kf$. We
define $\mu_f$ to be the codomain part of $U\phi$, and claim that $\eta$ and
$\vec\mu=(\mu,1)$ make $E$ into a monad.

First we check that $\vec\mu$ is a natural transformation (this is clear in the
case of $\vec\eta$). Consider $(a,b)\from f\to g$ any morphism of $\toparr$; in
the diagram
\[\xymatrix@R=1.5cm@C=1cm{
A\ar[d]_a\ar[rrrr]^(0.3)f & & \ar[d]_{\bar{K(a,b)}} &
\ar[d]|{\bar{K(\bar{K(a,b)},b)}}\ar@/_0.7pc/[l]_{\mu_f} & B\ar[d]^b \\
C\ar[rrrr]_(0.3)g & & & \ar@/^0.7pc/[l]^{\mu_g} & D 
}\]
we wish to compare the two sides of the naturality square which are
$\bar{K(a,b)}\circ\mu_f$ and $\mu_g\circ\bar{K(\bar{K(a,b)},b)}$. Because these
are both the body maps of cell complex morphisms whose boundary maps are $a$, we
can use the adjunction between $U$ and $K$. It's a quick diagram chase to see
that either side when composed with $Eg$ gives $b\circ EEf$, which means, by the
adjunction, that they are equal and $\vec\mu$ is natural.  

To check the monad laws, we use a similar method: $Ef\circ\mu_f\circ UKEf =
EEf\circ UKEf = Ef$ immediately shows that $\vec\mu\circ UK\eta=1$, and the
other unit law follows from $Ef\circ\mu_f\circ\bar{K(UKf,1_B)} =
EEf\circ\bar{K(UKf,1_B)} = Ef$. Finally, to demonstrate the multiplication law,
we wish to show that the diagram
\[\xymatrix@R=1.5cm@C=0.8cm{
A\ar@{=}[d]\ar[rrrrrrrr]^(0.3)f &&&&
&&\ar[lld]_{\mu_f}&\ar@/_0.7pc/[l]_{\mu_{Ef}}\ar[ld]^{\bar{K(\mu_f,1)}}&
B\ar@{=}[d] \\
A\ar[rrrrrrrr]_(0.3)f &&&&&&\ar@/^0.7pc/[ll]^{\mu_f}&& B
}\]
commutes. By the diagram chase
\begin{align*}
	Ef\circ\mu_f\circ\mu_{Ef} & = EEEf \\
                                  & = EEf\circ\bar{K(\mu_f,1)} \\
                                  & = Ef\circ\mu_f\circ\bar{K(\mu_f,1)}
\end{align*}
and the fact that the two maps we are comparing are both the body maps of cell
complex morphisms, we can use the adjunction again and the multiplication law
holds. 
\end{proof}

\begin{prop}
The pair $(UK,E)$ is an algebraic weak factorisation system.
\end{prop}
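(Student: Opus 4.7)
The plan is to verify the four defining conditions of an awfs: (i)~$UK$ is a comonad, (ii)~$E$ is a monad, (iii)~the copointed endofunctor $(UK,\vec{\epsilon})$ together with the pointed endofunctor $(E,\vec{\eta})$ form a functorial factorisation, and (iv)~the distributivity axiom holds. Conditions (i) and (ii) are already in hand: (i) is the corollary at the end of Section~\ref{sec:comonadicity}, and (ii) is Proposition~\ref{prop:monad}.

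For (iii), each of the four requirements listed in Section~\ref{sec:background} is immediate from the construction. The counit $\vec{\epsilon}_f=(1_A,Ef)$ built in Proposition~\ref{prop:adjunction} has identity domain component, so $UK$ is domain-preserving; dually the unit $\vec{\eta}_f=(UKf,1_B)$ from Proposition~\ref{prop:monad} has identity codomain component, so $E$ is codomain-preserving. Both central functors send $f$ to $\bar{Kf}$, and $Ef\circ UKf=f$ is just commutativity of the $\vec{\epsilon}_f$ square.

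The real content is (iv). Define $\Delta\from LR\to RL$ by components $(\delta_f,\mu_f)$, where $\delta_f$ is the body component of $\vec{\delta}_f$ and $\mu_f$ is the multiplication from Proposition~\ref{prop:monad}. One must check that $\Delta$ commutes with each of $\vec{\epsilon}$, $\vec{\delta}$, $\vec{\eta}$ and $\vec{\mu}$. The uniform strategy is to observe that, in every axiom, the two morphisms being compared are cell complex morphisms out of the \emph{same} cell complex into some $Kg$; since the domain components are identities throughout, the adjunction $U\dashv K$ lets us check the equality on the codomain components in $\toparr$. Those checks reduce to three basic identities: $Ef\circ UKf=f$, $E(UKf)\circ\delta_f=1_{\bar{Kf}}$ (from $\vec{\epsilon}L\cdot\vec{\delta}=1$ on body components), and $Ef\circ\mu_f=EEf$ (the defining equation of $\mu_f$ via the cell complex composite $KEf*Kf$), together with naturality.

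I expect the compatibility of $\Delta$ with $\vec{\delta}$ to be the most delicate step, as it couples the adjunction unit at three different cell complexes---$Kf$, $K(UKf)$ and $K(Ef)$---and geometrically says that the restratification of $Kf$ inside $K(UKf)$ is compatible with the restratification of $K(Ef)$ inside $K(UK(Ef))$. The same uniqueness strategy should still close it: post-composing both candidate body maps with the relevant $E$-maps and using $E(UKg)\circ\delta_g=1$ repeatedly reduces the statement to a naturality identity, at which point the adjunction uniqueness finishes the proof.
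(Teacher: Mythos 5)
Your proposal is correct and follows essentially the same route as the paper: both reduce everything to the distributivity axiom, define $\Delta$ by the components $(\delta_f,\mu_f)$, dispose of most of the component identities as immediate consequences of the (co)monad laws and the definitions of $\mu$ and $\delta$, and settle the one genuinely nontrivial identity (the one involving the comultiplication) by post-composing with the $E$-maps and invoking the uniqueness clause of the adjunction $U\dashv K$ for cell complex morphisms with equal boundary components. The only cosmetic difference is that you spell out the functorial-factorisation axioms that the paper treats as already established.
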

\begin{proof}
We have seen already that $UK$ is a comonad, $E$ is a monad and that they fit
together to form a functorial factorisation system. This is almost all that is
required to make $(UK,E)$ an awfs; the only remaining thing to check is the
distributivity axiom. There is a natural transformation $\Delta\from
UKE\Rightarrow EUK$ with components given by the square
\[\xymatrix{
  \bar{Kf}\ar[rr]^{UKEf}\ar[d]_{\delta_f}  && \bar{KEf}\ar[d]^{\mu_f}         \\
  \bar{KUKf}\ar[rr]_{EUKf}  && \bar{Kf}
  }\]
where $\delta_f$ is the codomain part of the comultiplication of $UK$. This
$\Delta$ is required to be what is called a \emph{distributive law} of $UK$ over
$E$; this means it must satisfy four commutative diagrams, basically saying it
commutes with the unit, counit, multiplication and comultiplication of $UK$ and
$E$. When we translate these commutative diagrams into components in $\toparr$,
they become eight identities in $\textbf{Top}$. Upon examination, six of these
identities are immediately true: four of them from the comonad and monad laws,
and two of them simply by definition of $\mu$ and $\delta$. 

The final two identities are in fact the same, and this single identity is shown
in the diagram
\[\xymatrix@C=0.8cm@R=1.5cm{
A\ar[rrrrrrrr]^(0.15)f &&&\ar@/^0.7pc/[l]^{\mu_{UKf}}& \ar@/_1pc/[ll]_{\delta_f}
& \ar@/^1pc/[ll]^{\bar{K(\delta_f,\mu_f)}}
&\ar@/_1pc/[ll]_{\mu_f}\ar@/^0.7pc/[l]^{\delta_{Ef}}&& B.
}\]
We'll use a similar argument to those in Proposition~\ref{prop:monad}. We have
\begin{align*}
EUKf\circ\mu_{UKf}\circ\bar{K(\delta_f,\mu_f)}\circ\delta_{Ef} 
  & = EEUKf\circ\bar{K(\delta_f,\mu_f)}\circ\delta_{Ef} \\
  & = \mu_f\circ EUKEf\circ\delta_{Ef} \\
  & = \mu_f \\
  & = EUKf\circ\delta_f\circ\mu_f
\end{align*}
and the maps we are comparing appear as cell complex morphisms, so we are done.
\end{proof}

Simply knowing how $UK$ appears as the comonad part of an awfs is not enough; we
have also defined pushforward and composition structures on $\cellcx$ and we
need to check that these are compatible with the awfs $(UK,E)$. First, we note
the general definition of pushforward and composition for the left maps of any
awfs; then we will check they agree on $\cellcx$. First of all, composition of
left maps has been defined by Riehl (see \cite{AlgModel}) as follows:

\begin{defn}
Given a pair of left maps $(f,\alpha)$ and $(g,\beta)$ where $f$ and $g$ are
composable. Then $gf$ has the \emph{composite left map structure} shown by the
dotted arrows in the following diagram
\[\xymatrix@R=1.4cm@C=1.2cm{
A\ar@{=}[d]\ar[rrrr]^(0.3){gf} &&& \ar@{.>}@/_0.7pc/[l]_{\mu_{gf}} & C\ar@{=}[d]
\\
A\ar[rr]_(0.3)f &\ar@(u,d)[ur]^(0.4){M(1,g)}&
B\ar@/^0.7pc/[l]^\alpha\ar[rr]_(0.3)g &\ar@{.>}[u]|{M(M(1,g)\circ\alpha,1)}&
C\ar@{.>}@/^0.7pc/[l]^\beta
}\]
where $M$ is the central functor of the awfs. We will write the composite left
map structure as $(gf,\beta\bullet\alpha)$.
\end{defn}

It is straightforward to check that this $(\beta\bullet\alpha)$ satisfies the
coalgebra axioms; more details can be found in \cite{AlgModel}. There is also
the following natural definition of pushforward. Note that we will begin using
the notation $[a,b]\from A+B\to X$ for the unique map satisfying $[a,b]\circ
i_A=a$ and $[a,b]\circ i_B=b$, where $i_A$ and $i_B$ are the inclusion maps of
the coproduct, and similarly for maps out of pushout objects.

\begin{defn}
Given a map $f\from A\to B$ with a left map structure $\alpha$ for some awfs
$(L,R)$, and a map $g\from A\to C$. Then the pushout of $f$ along $g$, which we
write as $g_*f$, has a canonical left map structure called the
\emph{pushforward} of $\alpha$ along $g$ and written as $g_*\alpha$. It is given
by considering
\[\xymatrix@C=1.4cm{
  A\ar[rr]^(0.3){f}\ar[d]_g & \ar[d]^{M(g,f_*g)}\ar@{<-}@/^1pc/[r]^\alpha &
B\ar[d]^{f_*g} \\
  C\ar[rr]_(0.3){g_*f} & \ar@{<-}@/_1pc/[r]_{g_*\alpha} & B\coprod_A C
}\]
and specifying the structure map $g_*\alpha$ as $[Lg_*f,M(g,f_*g)\circ\alpha]$.
\end{defn}

Again, checking the coalgebra axioms is very straightforward. Now, as a rather
important sanity check before we continue, we check the two definitions of
composites and pushforwards agree in $\cellcx$.

\begin{prop}
The definition of composition in $\cellcx$ given in
Definition~\ref{defn:composition} is the same as the general definition applied
to $\cellcx$ as the category of left maps for the awfs $(UK,E)$.
\end{prop}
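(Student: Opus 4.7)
The plan is to use comonadicity. Since $U \from \cellcx \to \toparr$ is comonadic, cell complex structures on a morphism are in bijection with $UK$-coalgebra structures, so the proposition reduces to showing that, for composable cell complexes $A$ and $B$, the $UK$-coalgebra structure on $U(B*A) = UB \circ UA$ coming from $B*A$ (Definition \ref{defn:composition}) agrees with the composite coalgebra $\beta \bullet \alpha$ built from the coalgebra structures $\alpha$ and $\beta$ on $UA$ and $UB$.

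I would start by recalling that, for any cell complex $C$, the coalgebra structure on $UC$ is the codomain component of the cell complex morphism $\gamma_C \from C \to KUC$ that corresponds to $1_{UC}$ under $U \dashv K$. So I need to compare the codomain component of $\gamma_{B*A}$ with the explicit awfs formula for $\beta \bullet \alpha$, which expresses $\beta \bullet \alpha$ in terms of $\alpha$, $\beta$, and the action of $K$ on the canonical arrow morphism $(1, UB) \from UA \to UB \circ UA$.

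The strategy is to construct, using the awfs composite formula and the adjunction, a cell complex morphism $\Phi \from B*A \to K(UB \circ UA)$ whose image under $U$ is the identity on $UB \circ UA$ and whose codomain component is $\beta \bullet \alpha$. By the uniqueness clause in the adjunction, $\Phi$ must coincide with $\gamma_{B*A}$, and so its codomain component $\beta \bullet \alpha$ must equal the coalgebra structure of $B*A$. To produce $\Phi$, I would splice together two ingredients: first, transfer $\gamma_A$ along the canonical cell complex morphism $A \to B*A$ from Definition \ref{defn:composition}, so that it lands in $K(UB \circ UA)$ via naturality of $K$; second, incorporate $\gamma_B$ through a pushforward-style construction that fills in the remaining strata of $B*A$ coming from $B$. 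Naturality of the adjunction should guarantee that these two pieces assemble into a single morphism whose codomain component is the awfs composite $\beta \bullet \alpha$.

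The main obstacle is justifying that the constructed map $\Phi$ is really a well-defined cell complex morphism, and that its codomain component really is $\beta \bullet \alpha$. This requires unpacking the construction of $K$ in Proposition \ref{prop:adjunction} and matching it stratum by stratum to the construction of $B*A$ in Definition \ref{defn:composition}. The Pullback Lemma for cell complexes together with the smallness used to partition the cells of $B$ as $(T_n)_\N$ should ensure the match: each cell of $B*A$ at level $n$ arises from a unique cell on one of the two sides, and the pushforward along $\gamma_A$ together with the piece built from $\gamma_B$ accounts for exactly this partition.
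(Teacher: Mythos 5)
Your overall strategy coincides with the paper's: both arguments reduce, via the uniqueness clause of the adjunction $U\dashv K$, to exhibiting $(1_A,\beta\bullet\alpha)$ as a cell complex morphism $B*A\to K(gf)$ lying over the counit, whereupon it must coincide with the unit $B*A\to KU(B*A)$ and hence its codomain part must be the coalgebra structure of $B*A$. Up to that point the proposal is sound, and the observation that $E(gf)\circ(\beta\bullet\alpha)=1$ is exactly the coalgebra axiom needed to place the candidate morphism over the counit.

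The gap is in the step you yourself flag as the main obstacle: actually producing the morphism $\Phi$ with codomain component $\beta\bullet\alpha$. Your plan --- transfer $\gamma_A$ along $A\to B*A$ by naturality, then ``incorporate $\gamma_B$ through a pushforward-style construction,'' to be verified by unpacking $K$ stratum by stratum using the Pullback Lemma and the partition $(T_n)_\N$ --- is not carried out, and as described it would force you to re-derive by hand facts that are already packaged elsewhere; it is also not clear how the two spliced pieces are to be shown coherent where they meet. The paper avoids all stratum-level bookkeeping by reading the awfs formula $\beta\bullet\alpha=\mu_{gf}\circ\bar{K(\bar{K(1,g)}\circ\alpha,1)}\circ\beta$ directly as a vertical-and-horizontal pasting of morphisms each already known to be a cell complex morphism: the structure maps $(1,\alpha)$ and $(1,\beta)$ (cell complex morphisms by definition of the comonadic coalgebra structure), images under $K$ of squares in $\toparr$, and the multiplication morphism $\phi$ of Proposition \ref{prop:monad}. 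The horizontal and vertical composition of cell complex morphisms established in Section \ref{sec:cellcx} then assembles these into the required $\Phi$ in one step. To complete your argument you should either substitute this pasting for the stratum-by-stratum matching, or else actually carry out that matching, which is the substantive content currently missing.
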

\begin{proof}
Given two cell complexes, considered as left maps $(f,\alpha)$ and $(g,\beta)$,
it is enough to check that $(1_A,\beta\bullet\alpha)$ is a cell complex morphism
$(g,\beta)*(f,\alpha)\to K(gf)$. Then by the adjunction between $U$ and $K$, the
fact that $E(gf)\circ(\beta\bullet\alpha)=1_C$ (which is one of the coalgebra
axioms) implies that the left map structure is the correct one. To show it is a
cell complex morphism, factorise it as
\[\xymatrix@R=0.9cm@C=1.3cm{
A\ar[rr]^f\ar@{=}[d]       && B\ar[rr]^g\ar@{=}[d]           && C\ar[d]^\beta \\
A\ar[rr]^f\ar@{=}[d]       && B\ar[rr]^{UKg}\ar[d]^\alpha    &&
\bar{Kg}\ar[dd]^{\bar{K(\bar{K(1,g)}\circ\alpha,1)}}            \\
A\ar[rr]^{UKf}\ar@{=}[d]    && \bar{Kf}\ar[d]^{\bar{K(1,g)}} && \\
A\ar[rr]^{UK(gf)}\ar@{=}[d] && \bar{K(gf)}\ar[rr]^{UKE(gf)}  &&
\bar{KE(gf)}\ar[d]^{\mu_{gf}} \\
A\ar[rrrr]^{UK(gf)}         &&&& \bar{K(gf)},
}\]
where every square is a cell complex morphism: some are images under $K$ of
morphisms in $\toparr$, others like $(1,\alpha)$ and $(1,\beta)$ are cell
complex morphisms by definition. The very bottom square is the cell complex
morphism referred to as $\phi$ in Proposition~\ref{prop:monad}. We can compose
the whole diagram together, using both vertical and horizontal composition of
cell complex morphisms (see Proposition~\ref{prop:horizontalcomp}), to
demonstrate that $(1_A,\beta\bullet\alpha)$ does have the structure of a cell
complex morphism.
\end{proof}

\begin{prop}
The definition of pushforward given in Definition~\ref{defn:pushforward} is the
same as the general definition applied to $\cellcx$ as the category of left maps
for the awfs $(UK,E)$.
\end{prop}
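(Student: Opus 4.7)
The plan is to check that the two coalgebra structures on the pushout map $g_*f\from C\to B\coprod_A C$ agree, where $f\from A\to B$ is the underlying map of the given cell complex $(X_n,S_n)_\N$ with left map structure $\alpha$, and $g\from A\to C$ is the map being pushed along. Write $\alpha^{\mathrm{cx}}$ for the coalgebra structure arising from Definition~\ref{defn:pushforward}---that is, the body part of the unit of the $U\dashv K$ adjunction at the cell complex $g_*(X_n,S_n)_\N$---and $\alpha^{\mathrm{gen}} = [L(g_*f),\; M(g,f_*g)\circ\alpha]$ for the general one. Both are morphisms $B\coprod_A C\to \bar{K(g_*f)}$; by the universal property of the pushout $B\coprod_A C$ they agree if and only if they agree after precomposition with each of the two pushout inclusions $g_*f\from C\to B\coprod_A C$ and $f_*g\from B\to B\coprod_A C$.

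For the inclusion from $C$, which is exactly $g_*f$, the coalgebra unit axiom (the domain triangle for any $UK$-coalgebra) gives $\alpha'\circ g_*f = UK(g_*f) = L(g_*f)$ for any coalgebra structure $\alpha'$ on $g_*f$. Hence both $\alpha^{\mathrm{cx}}$ and $\alpha^{\mathrm{gen}}$ precompose to $L(g_*f)$ on this inclusion, matching the first component of the bracket defining $\alpha^{\mathrm{gen}}$ trivially.

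For the inclusion from $B$, which is the map $f_*g$, I would use naturality of the unit of the $U\dashv K$ adjunction. Straight from Definition~\ref{defn:pushforward}, the stratum-by-stratum construction exhibits a canonical cell complex morphism $\iota\from (X_n,S_n)_\N \to g_*(X_n,S_n)_\N$ whose image under $U$ is the pushout square, viewed as the morphism $(g,f_*g)\from f\to g_*f$ in $\toparr$ (each square in the construction is by definition a pushout). Applying naturality of the adjunction unit to $\iota$ and taking the body part of the resulting commutative square in $\cellcx$ yields $\alpha^{\mathrm{cx}}\circ f_*g = \bar{K(g,f_*g)}\circ \alpha = M(g,f_*g)\circ\alpha$, matching the second component of $\alpha^{\mathrm{gen}}$.

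The main thing to check carefully is the correspondence between the body part of the adjunction unit at a cell complex and the corresponding $UK$-coalgebra structure on its underlying arrow, and the identification $U\iota = (g,f_*g)$; both are immediate from the construction of the pushforward. I do not expect a real obstacle here---the argument parallels the composition-agreement proposition just proved, with naturality of the adjunction unit playing the role that horizontal composition of cell complex morphisms did there.
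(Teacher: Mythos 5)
Your proposal is correct and follows essentially the same route as the paper: both check the two candidate structure maps on $B\coprod_A C$ against the two pushout legs, with the $C$-leg handled by the fact that any coalgebra structure map restricts to $L(g_*f)$ there, and the $B$-leg handled by observing that $(g,f_*g)$ underlies a cell complex morphism into the pushforward complex, so that $\beta\circ f_*g=\bar{K(g,f_*g)}\circ\alpha$. Your explicit appeal to naturality of the adjunction unit at that morphism is just a slightly more spelled-out version of the paper's one-line invocation of the same fact.
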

\begin{proof}
Let $(f,\alpha)$ be a cell complex, and write $(g_*f,g_*\alpha)$ for the
pushforward given by the general definition. Write $\beta$ for the structure map
of the cell complex given by Definition~\ref{defn:pushforward}; we need to show
that $\beta=g_*\alpha$. Firstly, it is clear that $g_*\alpha\circ
g_*f=\beta\circ g_*f$---this is one half of the neccessary identity. Also, it is
clear from the definition of $\beta$ that $(g,f_*g)$ has a cell complex morphism
structure. This means that 
\begin{align*}
g_*\alpha\circ f_*g & =\bar{K(g,f_*g)}\circ\alpha \\
                    & =\beta\circ f_*g,
\end{align*}
which is the other half of the identity.
\end{proof}

\section{The universal property}\label{sec:univ}

We now have an awfs $(UK,E)$ which we wish to compare with the awfs produced by
the small object argument. One could take the approach of directly examining the
two comonads; if you draw pictures of both it becomes clear they are essentially
the same. However, in the remainder of this paper we will use a different
approach in which we exhibit a universal property of $\cellcx$ and compare it to
the universal property established by Garner for the small object argument. The
author feels that this approach---while less efficient---is more illuminating,
as it expresses the universality of the cell complex category construction and
thus helps us to see why it is a natural way to build an awfs.

There is a canonical functor $\eta\from\cat{J}\to\cellcx$ over $\toparr$, given
by assigning each map in $\cat{J}$ its canonical height one, single-cell complex
structure. The pair of $\cellcx$ and $\eta$ is universal among functors from
$\cat{J}$ to categories of left maps over $\toparr$, with respect to the
composition preserving functors between left map categories (which are exactly
morphisms of awfs---see \cite{Understanding}). To make this work, we need to be
sure that composition in an arbitrary left map category is sufficiently well
behaved; we begin by proving two lemmas that express this.

\begin{lem}\label{lem:assoc}
For any awfs $(L,R)$, the composition rule in $\map{L}{}$ is strictly
associative.
\end{lem}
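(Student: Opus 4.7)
The plan is to show that for three composable left maps $(f,\alpha)$, $(g,\beta)$ and $(h,\gamma)$, the two iterated composites $(\gamma\bullet\beta)\bullet\alpha$ and $\gamma\bullet(\beta\bullet\alpha)$ coincide as coalgebra structures on $hgf$. Both sides are sections of $R(hgf)$ landing in $M(hgf)$, so it suffices to verify they are equal as maps in $\cat{C}$.

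First I would fix notation and expand each iterated composite using the definition of $\bullet$. Unfolding $\bullet$ once gives $\gamma\bullet\beta$ and $\beta\bullet\alpha$ as compositions of the original coalgebras with $M(-,-)$ applied to suitable morphisms, together with a $\mu$-term coming from the monad multiplication. Substituting these expressions into the outer $\bullet$ yields two explicit formulas for the triple composites, each a vertical stack involving the three original coalgebras, three applications of $M(-,-)$ to identity-shaped pairs (such as $M(1,g)$ and $M(1,h)$), and one or two applications of the $\mu$-term for $R$. These two formulas differ only in the order in which the bracketing assembles the pieces.

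To identify the two formulas, I would carry out a diagram chase combining three ingredients: the functoriality of the central functor $M$, naturality of the copoint/counit and of the maps $M(-,-)$, and the associativity axiom for the monad $R$ (which controls how iterated $\mu$-terms simplify). The equality then reduces to a strictly associative bookkeeping: both triple composites arise from assembling the same data —$\alpha$, $\beta$, $\gamma$, and the three middle objects $Mf$, $Mg$, $Mh$ viewed inside $M(hgf)$— into a single section, and the order of assembly does not matter because the underlying composites $(hg)f = h(gf)$ in $\cat{C}$ are literally equal.

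The main obstacle will be the combinatorial bookkeeping: there are many commutative squares to track, and the formula for $\bullet$ is already built from several layers, so writing two three-layer versions out side by side is verbose. A cleaner presentation would package the argument as follows: characterise $\gamma\bullet\beta\bullet\alpha$ by a universal property that is manifestly independent of bracketing —namely, it is the unique section of $R(hgf)$ whose restrictions along $f$, $g$ and $h$ recover $\alpha$, $\beta$ and $\gamma$ after the appropriate $M(-,-)$ transport— and then verify that both $(\gamma\bullet\beta)\bullet\alpha$ and $\gamma\bullet(\beta\bullet\alpha)$ satisfy this description. Strict associativity then follows from the strict associativity of composition in $\cat{C}$ together with the monad laws for $\vec{\mu}$.
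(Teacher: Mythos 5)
Your main line of argument is essentially the paper's own proof: unfold both bracketings of the triple composite, use naturality of $\vec{\mu}$ together with the monad multiplication law to push both structure maps through $MRR(hgf)$, and finish by functoriality of the central functor $M$, which is exactly how the paper reduces the comparison to an interchange identity of the form $M(M(1,h),h)\circ M(M(1,g)\circ\alpha,1)=M(M(1,hg)\circ\alpha,1)\circ M(M(1,h),1)$. One caution: the ``cleaner presentation'' you offer at the end---characterising $\gamma\bullet\beta\bullet\alpha$ as \emph{the unique} section of $R(hgf)$ restricting appropriately along $f$, $g$ and $h$---is not something you have established (it is not even clear what ``restriction along $g$'' means for a map $D\to M(hgf)$, nor why such data would determine the section uniquely), so the proof must rest on the explicit diagram chase rather than on that purported universal property.
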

\begin{proof} 
Given three composable left maps, $(f,\alpha)$, $(g,\beta)$ and $(h,\gamma)$, we
obtain two left map structures on $hgf$ given by the two ways of composing,
namely $\gamma\bullet(\beta\bullet\alpha)$ and
$(\gamma\bullet\beta)\bullet\alpha$. The structure maps for these are both
shown, using dotted arrows, in the following diagram:
\[\xy <1.15mm,0mm>:
(0,0)*+{A}="A3"; (0,32)*+{A}="A2"; (0,64)*+{A}="A1";
(96,0)*+{D}="D3"; (96,32)*+{D}="D2"; (96,64)*+{D}="D1";
(32,32)*+{B}="B"; (64,32)*+{C}="C";
{\ar@{=} "A1"; "A2"};
{\ar@{=} "A2"; "A3"};
{\ar@{=} "D1"; "D2"};
{\ar@{=} "D2"; "D3"};
{\ar^(0.3){hgf} "A1"; "D1"};
{\ar_(0.3){hgf} "A3"; "D3"};
{\ar^(0.3){f} "A2"; "B"};
{\ar^(0.3){g} "B"; "C"};
{\ar^(0.3){h} "C"; "D2"};
{\ar_(0.2){hg} "B"; "D3"};
{\ar_(0.2){gf} "A1"; "C"};
(48,0)*i\xycircle(1,1){}="hgf2"; (72,0)*i\xycircle(1,1){}="Rhgf2";
(84,0)*i\xycircle(1,1){}="RRhgf2";
(48,64)*i\xycircle(1,1){}="hgf1"; (72,64)*i\xycircle(1,1){}="Rhgf1";
(84,64)*i\xycircle(1,1){}="RRhgf1";
(16,32)*i\xycircle(1,1){}="f"; (48,32)*i\xycircle(1,1){}="g";
(80,32)*i\xycircle(1,1){}="h";
(32,48)*i\xycircle(1,1){}="gf"; (48,40)*i\xycircle(1,1){}="Rgf";
(64,16)*i\xycircle(1,1){}="hg"; (80,8)*i\xycircle(1,1){}="Rhg";
{\ar@/_0.7pc/_{\alpha} "B"; "f"};
{\ar@/^0.7pc/^{\beta} "C"; "g"};
{\ar@{.>}@/_0.7pc/_{\gamma} "D2"; "h"};
{\ar@{.>}@/_1.4pc/_{\mu_{hgf}} "Rhgf1"; "hgf1"};
{\ar@/_0.7pc/_{\mu_{R(hgf)}} "RRhgf1"; "Rhgf1"};
{\ar@{.>}@/^1.4pc/^{\mu_{hgf}} "Rhgf2"; "hgf2"};
{\ar@/^0.7pc/^{\mu_{R(hgf)}} "RRhgf2"; "Rhgf2"};
{\ar@{.>}@/_0.9pc/_{\mu_{hg}} "Rhg"; "hg"};
{\ar@/^0.9pc/^{\mu_{gf}} "Rgf"; "gf"};
{\ar "g"; "Rgf"};
{\ar@{.>}|(0.4){M(M(1,h)\circ\beta,1)} "h"; "Rhg"};
{\ar@(u,d)^(0.4){M(1,g)} "f"; "gf"};
{\ar@(u,dl)^(0.35){M(1,h)} "gf"; "hgf1"};
{\ar@(d,u)_{M(1,hg)} "f"; "hgf2"};
{\ar@(d,u)^(0.7){M(1,h)} "g"; "hg"};
{\ar@{.>}@(d,u)_(0.2){M(M(1,hg)\circ\alpha,1)} "hg"; "Rhgf2"};
"Rhg"; "RRhgf2" **\crv{(80,2)&(84,6)}?(0.97)*\dir{>};
{\ar@(u,dl)_(0.3){M(M(1,h),h)} "Rgf"; "Rhgf1"};
{\ar@{.>}@(ul,d) "h"; "Rhgf1"};
{\ar@(u,d)_{\psi} "h"; "RRhgf1"};
(47,54)*{\Box}; (76,5)*{\Box};
\endxy \]

Using two naturality squares for $\mu$ (marked by the little squares in the
diagram) and the multiplication law, we can factor both structure maps through
$MRR(hgf)$, and hence reduce the problem to that of comparing the map
\[M(M(M(1,h),h)\circ M(M(1,g)\circ\alpha,1)\circ\beta,1)\] (which is marked in
the diagram as $\psi$) with the composite \[M(M(M(1,hg)\circ\alpha,1),1)\circ
M(M(1,h)\circ\beta,1).\] By the functoriality of $M$, this reduces to
considering
\begin{align*}
 M(M(1,h),h)\circ M(M(1,g)\circ\alpha,1) &= M(M(1,hg)\circ\alpha,h) \\
                                         &= M(M(1,hg)\circ\alpha,1)\circ
M(M(1,h))
\end{align*}
and the two dotted composites in the diagram are the same.
\end{proof}

The second lemma will prove what we will call the \emph{stacking} property of
left maps; it is absolutely vital in what follows because it justifies the
requirement for cell complexes to be proper sequences. Stacking allows you to
take a left map which is defined as a composite of colimits and move the
individual elements of the colimits about without altering the left map
structure. Since a cell complex is essentially defined as a composite of
colimits, this says we can move cells in between strata freely; hence every
potential cell complex can be reordered to make it proper---and properness
defines a natural normal form for cell complexes which hugely simplifies the
definition. 

\begin{lem}\label{lem:stacking}
For any awfs $(L,R)$, the composition rule in $\map{L}{}$ is well behaved with
respect to coproducts and pushforwards in the following way: given $f\from A\to
A'$ and $g\from B\to B'$ equipped with left map structures $\alpha$ and $\beta$,
and maps $a\from A\to X$ and $b\from B\to X$, there is an isomorphism of left
maps
\[([a,b]_*(f+g),[a,b]_*(\alpha+\beta))\iso(((a_*f)\circ b)_*g,((a_*f)\circ
b)_*\beta)\bullet(a_*f,a_*\alpha).\] 
\end{lem}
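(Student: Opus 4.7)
My plan is to show that both sides of the asserted isomorphism are left map structures on a single common underlying arrow $X\to Y$, and then to verify these structures are equal. The first step is to identify the common underlying map using the pasting lemma for pushouts: the pushout of $f+g$ along $[a,b]$ can be constructed iteratively, first by pushing $f$ forward along $a$ to form $X\coprod_A A'$, then by pushing $g$ forward along the composite $B\to X\to X\coprod_A A'$. The target $Y$ is canonically both $(A'+B')\coprod_{A+B}X$ and $(X\coprod_A A')\coprod_B B'$, and under this identification the underlying arrow of the left side agrees with that of the right side. It remains only to show that the two structure maps $Y\to MY$ coincide.

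To compare them, I use the universal property of $Y$ as an iterated pushout: any map out of $Y$ is determined by its restrictions to the three components $X$, $A'$ and $B'$, subject to compatibility on $A$ and $B$. I then check agreement on each component. On the $X$ component both structure maps restrict to the $L$-map of the underlying arrow, as is immediate from the definitions of pushforward (the first coordinate of the pair $[Lg_*f,M(g,f_*g)\circ\alpha]$) and of composition (whose structure map, when postcomposed with the right-hand factor, recovers the composite left map, forcing this restriction). On the $A'$ component, the left side gives $M([a,b],(f+g)_*[a,b])\circ\alpha$, using that the coproduct coalgebra structure $\alpha+\beta$ restricts to $\alpha$ on $A'$; the right side, after tracing through the pushforward $a_*\alpha$ and the composite structure formula from Lemma \ref{lem:assoc}'s preceding definition, reduces to the same expression by functoriality of $M$. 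On the $B'$ component, a parallel analysis -- routing $\beta$ through the second pushforward $((a_*f)\circ b)_*\beta$ before applying the outer composite structure -- yields agreement.

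The main obstacle will be the diagram chase on the $B'$ component. There one must verify an identity of the form $M(h_1,h_2)\circ\beta=M(k_1,k_2)\circ M(l_1,l_2)\circ\beta$ for an explicit collection of maps arising from the nested pushforwards. As in the proof of Lemma \ref{lem:assoc}, the critical tools are the functoriality of $M$, namely $M(h_1,h_2)\circ M(k_1,k_2)=M(h_1k_1,h_2k_2)$, together with the pasting lemma for pushouts, which identifies the targets of the successive pushforwards $a_*f$ and $((a_*f)\circ b)_*g$ with the single pushforward $[a,b]_*(f+g)$ and under this identification forces the inner arguments of $M$ on the two sides to coincide. Once the three component equalities are established, the pushout universal property of $Y$ forces equality of the two structure maps, and the claimed isomorphism of left maps follows.
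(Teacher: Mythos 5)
Your proposal is correct and follows essentially the same route as the paper: identify the two underlying arrows via the pasting of pushouts, then compare the structure maps on the three components $X$, $A'$ and $B'$ using the universal property of the iterated pushout, with the $X$ component being $Lh$ on both sides and the $A'$, $B'$ components handled by diagram chases through the functoriality of $M$ and the coproduct coalgebra structure. The only slip is notational (the structure maps land in $Mh$, not $MY$), which does not affect the argument.
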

\begin{rem}
The proposition basically says that $f$ and $g$ can be `glued on' to $X$ in any
order, or simultaneously by taking a coproduct first, and it makes no difference
to the resulting left map. In the form of a picture:
\[\xy <1mm,0mm>:
(0,0)*\xycircle(5,11){-};
(20,2)*{\bullet}="b1"; (20,8)*{\bullet}="b2" **\crv{(25,2) & (25,8)};
(20,-8)*{\bullet}="b3"; (20,-2)*{\bullet}="b4" **\crv{(25,-8) & (25,-2)};
(17,11)*{}; (26,-11)*{} **\frm{--};
{\ar@{<.} (-1,2)*{}; "b1"};
{\ar@{<.} (-1,-8)*{}; "b3"};
{\ar@{<.} (-1,8)*{}; "b2"};
{\ar@{<.} (-1,-2)*{}; "b4"};
(0,14)*{X}; (22,14)*{A'+B'}; (10,0)*{\scriptstyle{[a,b]}}
\endxy
\quad\quad \iso \quad\quad
\xy <1mm,0mm>:
(0,0)*\xycircle(5,11){-};
(13,2)*{\bullet}="b1"; (13,8)*{\bullet}="b2" **\crv{(18,2) & (18,8)};
(28,-8)*{\bullet}="b3"; (28,-2)*{\bullet}="b4" **\crv{(33,-8) & (33,-2)};
(-7,17)*{}; (22,-13)*{} **\frm{--};
{\ar@{<.} (-1,2)*{}; "b1"};
{\ar@{<.} (-1,-8)*{}; "b3"};
{\ar@{<.} (-1,8)*{}; "b2"};
{\ar@{<.} (-1,-2)*{}; "b4"};
(0,14)*{X}; (19,9)*{A'}; (34,-1)*{B'};
(8,5)*{\scriptstyle{a}}; (14,-5)*{\scriptstyle{(\alpha_*f)\circ b}};
\endxy\]
\end{rem}
\begin{proof}
The left map on the left hand side is constructed using the pushout square
\[\xy <1.3mm,0mm>:
(0,0)*+{X}="BL";
(20,0)*+{Y}="BR";
(0,10)*+{A+B}="TL";
(20,10)*+{A'+B'}="TR";
{\ar_{[a,b]_*(f+g)} "BL"; "BR"};
{\ar_{[a,b]} "TL"; "BL"};
{\ar^{[a,b]'} "TR"; "BR"};
{\ar^{f+g} "TL"; "TR"};
(16,4)*{}; (18,4)*{} **\dir{-};
(16,2)*{}; (16,4)*{} **\dir{-};
\endxy\]
while the left map on the right hand side is constructed using the two pushout
squares in the diagram
\[\xy <1.3mm,0mm>:
(0,10)*+{A}="A";
(20,10)*+{A'}="Ap";
(0,0)*+{X}="X";
(20,0)*+{Z}="Z";
(40,0)*+{W}="W";
(20,-10)*+{B}="B";
(40,-10)*+{B'}="Bp";
{\ar^f "A"; "Ap"};
{\ar_a "A"; "X"};
{\ar^{a'} "Ap"; "Z"};
{\ar^{a_*f} "X"; "Z"};
{\ar^b "B"; "X"};
{\ar_{(a_*f)\circ b} "B"; "Z"};
{\ar^{((a_*f)\circ b)_*g} "Z"; "W"};
{\ar_g "B"; "Bp"};
{\ar_{b'} "Bp"; "W"};
(16,4)*{}; (18,4)*{} **\dir{-};
(16,2)*{}; (16,4)*{} **\dir{-};
(36,-4)*{}; (38,-4)*{} **\dir{-};
(36,-2)*{}; (36,-4)*{} **\dir{-};
\endxy\]
and then composing. 

The first thing to note is that $Y$ and $W$ have exactly the same universal
property; we are thus able to choose pushouts in such a way that $Y=W$.
Furthermore, if we make this choice, the underlying maps of the left maps we are
comparing are identical. So if we check that the structure maps are equal, with
this choice of pushout objects, then for any other choice the left maps we
obtain will be isomorphic. We will now write $h\from X\to W$ for the underlying
map; we have two structure maps $W\to Mh$ and we wish to show they are equal.

Using the universal property of $W$, it is sufficient to show the maps $X\to
Mh$, $A'\to Mh$ and $B'\to Mh$ that make up these structure maps agree. The
$X\to Mh$ parts are both just $Lh$, so they are easy. Showing that the other two
parts agree can be done with two simple diagram chases. First, to simplify
notation, we will start writing $g'$ for $((a_*f)\circ b)_*g$, and $f'$ for
$a_*f$; we will also write $\alpha'$ for the pushforward structure map on $f'$
and $\beta'$ for the one on $g'$. Next, we note that the map $[a,b]'$ can be
written as $[g'a',b']$, using the universal property of $W$. Now consider the
diagram
\[\xymatrix@R=1.4cm{
 & X\ar@{=}[d]\ar[rrrr]^(0.3)h &&&\ar@/^0.7pc/[l]^{\mu_h}& W\ar@{=}[d] \\
 & X\ar[rr]^(0.3){f'} &\ar@(u,d)[ur]^(0.3){M(1,g')}&
Z\ar@/_0.7pc/[l]_{\alpha'}\ar[rr]^(0.3){g'} &\ar[u]|{M(M(1,g')\circ\alpha',1)}&
W\ar@/_0.7pc/[l]_{\beta'} \\
A\ar[ur]^{a}\ar[rr]_(0.3)f &\ar[ur]& A'\ar[ur]_{a'}\ar@/^0.7pc/[l]^\alpha &&
B\ar[ul]^{f'\circ b}\ar[rr]_(0.3)g &\ar[ul]&
B'.\ar[ul]_{b'}\ar@/^0.7pc/[l]^\beta
}\]

Two straightforward chases show that $(\beta'\bullet\alpha')\circ g'\circ
a'=M(a,g'a')\circ\alpha$ and that $(\beta'\bullet\alpha')\circ
b'=M(b,b')\circ\beta$. We then factor $M(a,g'a')$ through $M(i_A,i'_A)$ and
factor $M(b,b')$ through $M(i_B,i'_B)$, the two inclusion maps to the coproduct.
Then using the fact that, by definition of $(\alpha,\beta)$,
$M(i_A,i_A')\circ\alpha=(\alpha+\beta)\circ i'_A$ and
$M(i_B,i'_B)\circ\beta=(\alpha,\beta)\circ i'_B$, it is clear that
$(\beta'\bullet\alpha')$ and $[a,b]_*(\alpha+\beta)$ agree on both $A'$ and
$B'$.
\end{proof}

The universal property of $\cellcx$ with respect to $\cat{J}$ now follows
without too much difficulty.

\begin{prop}
For any awfs $(L,R)$ and functor $F\from\cat{J}\to\map{L}{}$ over $\toparr$,
there is a unique $F'\from\cellcx\to\map{L}{}$ over $\toparr$ which satisfies
$F=F'\circ\eta$ and preserves composition. 
\end{prop}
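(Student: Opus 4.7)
The plan is to construct $F'$ by translating the combinatorial description of a cell complex---as a transfinite composite of pushforwards of coproducts of single cells from $\cat{J}$---into the corresponding operations available in $\map{L}{}$. Since $\map{L}{}$ is a category of coalgebras for a comonad on the cocomplete $\toparr$, it inherits small colimits, and Section \ref{sec:awfs} has furnished pushforward and composition operations ($g_*\alpha$ and $\bullet$) for the left maps of any awfs. These mirror exactly the constructions used to define $\cellcx$.

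Concretely, for a stratum $(X, S)$ I would set $F'(X, S)$ to be the pushforward in $\map{L}{}$ of the coproduct $\coprod_{s \in S} F(\kappa_s)$ along the structural map $\langle b_s \rangle_{s \in S} \from \coprod \bound \kappa_s \to X$. Its underlying arrow is $U(X, S)$ by functoriality of pushforward and coproduct over $\toparr$. For a general cell complex $A = (X_n, S_n)_\N$ I would define $F'(A)$ as the transfinite $\bullet$-composite of the left maps $F'(X_n, S_n)$; this is well-defined thanks to the strict associativity of Lemma \ref{lem:assoc} together with the cocompleteness of $\map{L}{}$. On morphisms the definition is analogous, stratum by stratum, and the identities $F = F' \circ \eta$ and preservation of composition then hold by construction.

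The main obstacle will be well-definedness. The decisive tool is the stacking lemma (Lemma \ref{lem:stacking}), which guarantees that redistributing cells among adjacent strata does not alter the resulting left map. This is precisely what makes the properness normal form on cell complexes compatible with the composition structure of $\map{L}{}$, so that the assignment $A \mapsto F'(A)$ does not depend on any hidden choices of layering; without stacking the construction would be ambiguous, which is the real reason properness had to be built into the definition of $\cellcx$ in the first place. Functoriality with respect to cell complex morphisms then follows because stratum-level morphisms induce morphisms of the coproduct and pushforward diagrams in $\map{L}{}$ in the evident way, and preservation of composition is immediate from the definition via transfinite $\bullet$-composites.

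For uniqueness, suppose $G$ is any other composition-preserving functor $\cellcx \to \map{L}{}$ over $\toparr$ with $G \circ \eta = F$. Every cell complex is by definition a transfinite composite of strata in $\cellcx$, so composition preservation reduces the question to determining $G$ on strata. In turn, each stratum can be assembled in $\cellcx$---up to the isomorphism furnished by the stacking lemma---from single cells via iterated pushforward and composition, which forces $G(X, S) = F'(X, S)$ using agreement on $\cat{J}$. Hence $G = F'$ on all of $\cellcx$.
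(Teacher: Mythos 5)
Your construction of $F'$ matches the paper's: strata go to pushforwards of coproducts of the $F(\kappa_s)$, general complexes to transfinite $\bullet$-composites of their strata, and Lemmas \ref{lem:assoc} and \ref{lem:stacking} are invoked for exactly the right purpose, namely to show that the value of $F'$ on a composite does not depend on how the cells are distributed among strata. That part is sound and is essentially the argument in the paper.

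The gap is in your uniqueness argument. You reduce a competing functor $G$ to its values on strata via composition preservation, and then say that a stratum is assembled from single cells by iterated pushforward and coproduct, ``which forces $G(X,S)=F'(X,S)$ using agreement on $\cat{J}$.'' But the hypotheses on $G$ are only that it lies over $\toparr$, satisfies $G\circ\eta=F$, and preserves composition --- nothing says it preserves coproducts or pushforwards, and without that the assembly of a stratum from single cells tells you nothing about $G(X,S)$. The paper closes exactly this hole before anything else: it shows that \emph{any} functor between left map categories over $\toparr$ automatically preserves colimits (because the forgetful functors are conservative and colimit-preserving) and automatically preserves pushforwards (because the pushout square $(g,f_*g)$ underlies a cell complex morphism, hence is sent to a coalgebra morphism, and the pushout property then pins down the structure map on $B\coprod_A C$). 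You need to supply these two automatic-preservation facts; once they are in place your uniqueness argument, and the rest of your proof, goes through as the paper's does.
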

\begin{proof}
What can we say about such a map? Firstly, it automatically preserves colimits;
this follows from a standard argument, based on the fact that the forgetful
functors are conservative and preserve colimits themselves. Secondly, we claim
that it automatically preserves pushforwards, meaning that for any cell complex
$(f,\alpha)$ and appropriate $g$, we have $F'(g_*f,g_*\alpha)=g_*F'(f,\alpha)$.
Since $(g,f_*g)$ is a the underlying map of a cell complex morphism, by
functoriality of $F'$ it is a morphism of coalgebras for $L$. This means that
$F'(g_*\alpha)\circ f_*g=M(g,f_*g)\circ F'\alpha$, and we know the latter is
$g_*(F'\alpha)\circ f_*g$; this shows the two morphisms from the codomain of $f$
to $M(g_*f)$ are equal. It is trivial to show the two morphisms from the
codomain of $g$ to $M(g_*f)$ are equal, so by the pushout property, the two left
map structures are equal.

These properties allow us to see that for any stratum $(X,S)$, the image
$F'(X,S)$ is determined entirely, since a stratum is just a pushforward of a
coproduct of objects of $\cat{J}$, and $F'$ must take the objects of $\cat{J}$,
considered as cell complexes, to their images under $F$. Furthermore, any cell
complex is the composite of all its strata; thus if $F'$ is to preserve
composition it will be determined completely by $F$. So we have essentially
constructed a single possible candidate $F'$; now we must check that it
preserves all composites, not just the ones given by proper connected sequences
of strata. But using Lemma~\ref{lem:assoc} and Lemma~\ref{lem:stacking}, we can
take any composite of cell complexes and move the individual cells between
strata without effecting the image of the composite under $F'$; therefore $F'$
does indeed preserve composition and we are done.
\end{proof}

\section{The main result}\label{sec:result}

We are writing $(\mathbb{L},\mathbb{R})$ for the awfs that is generated from
$\cat{J}$ using Richard Garner's small object argument. This is the object we
really care about; we want to understand the coalgebras of $\mathbb{L}$. This
awfs has a certain universal property with respect to $\cat{J}$:

\begin{defn}
Given a small category $\cat{I}$ over $\cat{C}\arr$, an awfs $(L,R)$ on
$\cat{C}$ is \emph{free} with respect to $\cat{I}$ if there is a morphism
$\eta\from\cat{I}\to\map{L}{}$ over $\cat{C}\arr$ such that for any other awfs
$(L',R')$ on $\cat{C}$ and functor $F\from\cat{I}\to\map{L}{'}$ over
$\cat{C}\arr$, there is a unique awfs morphism $\alpha\from(L,R)\to(L',R')$ such
that $F=\alpha_*\circ\eta$. (The functor $\alpha_*\from\map{L}{}\to\map{L}{'}$
is the lifting of $\alpha$, as a comonad morphism, to the categories of
coalgebras.)
\end{defn} 

In his paper \cite{Understanding}, Garner both constructs
$(\mathbb{L},\mathbb{R})$ and proves that it is indeed free with respect to
$\cat{J}$. In this section we will show the same of our awfs $(UK,E)$; then,
since it is a universal property, the two awfs will be shown to be isomorphic.
We have already done the hard work---by the following lemma, the universal
property of $\cellcx$ implies that $(UK,E)$ is free with respect to $\cat{J}$.
We note that this lemma is a special case of Lemma~6.9 in \cite{AlgModel}.

\begin{lem}\label{lem:bijection}
Given two awfs, $(L,R)$ and $(L',R')$, awfs morphisms $(L,R)\to(L',R')$ are in
bijection with functors $\map{L}{}\to\map{L}{'}$ over $\cat{C}\arr$ that
preserve the composition of left maps.
\end{lem}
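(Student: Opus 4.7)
The plan is to construct maps in both directions explicitly and check they are mutually inverse.

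For the forward direction, an awfs morphism $\alpha\from(L,R)\to(L',R')$ contains in particular a morphism of comonads $L\to L'$, which lifts in the standard way to a functor $\alpha_*\from\map{L}{}\to\map{L}{'}$ sending a coalgebra $(f,s)$ to $(f,\alpha_f\circ s)$. That $\alpha_*$ lies over $\toparr$ is automatic because $\alpha_f$ has identity domain component (both $L$ and $L'$ are domain preserving), and preservation of composition of left maps is a diagram chase: the composite structure $\beta\bullet\alpha$ is defined in terms of the central functor $M$, the counit $\epsilon$, and the multiplication $\mu$, each of which $\alpha$ respects.

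For the backward direction, given $\phi\from\map{L}{}\to\map{L}{'}$ over $\toparr$ preserving composition, I would construct an awfs morphism as a natural transformation $\xi\from M\to M'$ between central functors. For each $f\from A\to B$, apply $\phi$ to the free $L$-coalgebra $(Lf,\delta_f)$: since $\phi$ is over $\toparr$, the image has underlying arrow $Lf$ and is specified by a section $\sigma_f\from Mf\to M'(Lf)$ of $R'(Lf)$. Define $\xi_f = M'(\epsilon_f)\circ\sigma_f\from Mf\to M'f$, where $\epsilon_f=(1_A,Rf)\from Lf\to f$ is the counit of $L$ in $\toparr$ and $M'(\epsilon_f)$ is its image under the central functor of the target awfs. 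A short naturality check gives $R'f\circ\xi_f=Rf$, so $\xi$ is a morphism of functorial factorisations, and its naturality in $f$ follows from functoriality of $\phi$ combined with naturality of the factorisations.

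The main obstacle will be verifying that $\xi$ is a morphism of comonads, namely that it commutes with the two comultiplications $\delta$ and $\delta'$, since compatibility with the counits is built into the construction. This is where composition-preservation of $\phi$ is essential: the comultiplication of an awfs is encoded in the interaction between the free coalgebra $(Lf,\delta_f)$ and the doubled coalgebra $(L(Lf),\delta_{Lf})$, and the coalgebra obtained by composing one free coalgebra with another recovers $\delta$ up to canonical identifications; $\phi$-preservation of such composites then forces $\xi$ to intertwine $\delta$ with $\delta'$.

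Finally, I would check the two constructions are mutually inverse. Starting from $\alpha$ and running both, the recovered $\xi_f$ equals the codomain component of $\alpha_f$ by naturality of $\alpha$ along $\epsilon_f$ together with the counit law $L(\epsilon_f)\circ\delta_f=1_{Lf}$. Starting from $\phi$ and lifting the resulting $\xi$, both functors agree on all free coalgebras (using the same composition-preservation argument used above for the comultiplication axiom), and therefore on all coalgebras: every coalgebra $(f,s)$ admits $s$ itself as a coalgebra morphism to its free cover $(Lf,\delta_f)$, so functoriality combined with agreement on free objects forces $\xi_*=\phi$.
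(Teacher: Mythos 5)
Your forward direction matches the paper's, and your construction of $\xi$ in the backward direction is literally the paper's construction: your $\xi_f=M'(\epsilon_f)\circ\sigma_f=M'(1,Rf)\circ\sigma_f$ is the paper's $\chi_f=M'(1,Rf)\circ\gamma_f$, and your closing mutual-inverse argument via free coalgebras is the standard one the paper also invokes. The problem is in the middle of the backward direction: you have misidentified which axiom actually consumes the composition-preservation hypothesis, and consequently you never verify the axiom that does. An awfs morphism must be simultaneously a comonad morphism $(1,\xi)\from L\to L'$ and a monad morphism $(\xi,1)\from R\to R'$. The comonad half---including compatibility with the comultiplications $\delta$ and $\delta'$---is automatic: it is an instance of the standard bijection between comonad morphisms and functors between the coalgebra categories over the base, and uses only that $\phi$ is a functor over $\cat{C}\arr$, not that it preserves composition. (If the $\delta$-compatibility genuinely required composition-preservation, that standard bijection would be false.)

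What is not automatic, and what your proposal never addresses, is the monad multiplication axiom $\xi_f\circ\mu_f=\mu'_f\circ M'(\xi_f,1)\circ\xi_{Rf}$. This is where the paper does its real work: it rewrites $\mu_f$ as $M(1,RRf)\circ(\delta_{Rf}\bullet\delta_f)$, i.e.\ expresses the monad multiplication through a \emph{composite} of the free coalgebras $(Lf,\delta_f)$ and $(LRf,\delta_{Rf})$, and then applies $F(\delta_{Rf}\bullet\delta_f)=\gamma_{Rf}\bullet\gamma_f$---exactly the composition-preservation hypothesis---to carry the identity across to $(L',R')$. Without this step your $\xi$ is only known to be a comonad morphism, and the lemma would then (falsely) assert a bijection with \emph{all} functors over $\cat{C}\arr$, rendering the composition-preservation clause vacuous. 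To repair the proof, replace your paragraph about $\delta$ with a citation of the comonad-morphism/coalgebra-functor correspondence, and supply the $\mu$-compatibility chase in its place.
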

\begin{proof}
First, assume we have an awfs morphism $\chi\from(L,R)\to(L',R')$. In
particular, this is a comonad morphism $L\to L'$, and this means it lifts to a
functor $\tilde{\chi}\from\map{L}{}\to\map{L}{'}$ over $\cat{C}\arr$. If
$(f,\alpha)$ is an $L$-coalgebra, its image under $\tilde{\chi}$ is given by
$(f,\chi_f\circ\alpha)$---this is the standard way of lifting a comonad morphism
to the categories of coalgebras. We will check that $\tilde{\chi}$ preserves
composition of coalgebras, that is, that
$\tilde{\chi}(f,\alpha)\bullet\tilde{\chi}(g,\beta)=\tilde{\chi}\big((f,
\alpha)\bullet(g,\beta)\big)$. This is a diagram chase which proceeds as
follows:
\begin{align*}
 \chi_{gf}\circ\mu_{gf}&\circ M(M(1,g)\circ\alpha,1)\circ\beta \\
  & =\mu'_{gf}\circ M'(\chi_{gf},1)\circ\chi_{R(gf)}\circ
M(M(1,g)\circ\alpha,1)\circ\beta \\
  & =\mu'_{gf}\circ M'(\chi_{gf},1)\circ
M'(M(1,g)\circ\alpha,1)\circ\chi_g\circ\beta \\
  & =\mu'_{gf}\circ M'(M'(1,g)\circ\chi_f\circ\alpha,1)\circ\chi_g\circ\beta
\end{align*}
where the first step uses the fact that $\chi$ is a comonad morphism, the second
step uses naturality and the third step uses functoriality and naturality. 

Now we start with a functor $F\from\map{L}{}\to\map{L}{'}$ over $\cat{C}\arr$
and we assume it preserves composition. We use $F$ to define a natural
transformation $\gamma\from M\to M'L$ by writing the image of the coalgebra
$(Lf,\delta_f)$ as $(Lf,\gamma_f)$. Then we define the natural transformation
$\chi\from M\to M'$ by $\chi_f=M'(1,Rf)\circ\gamma_f$. This is the standard way
of constructing a comonad morphism from a functor between the categories of
coalgebras; hence $(1,\chi)$ is a comonad morphism $L\to L'$. We will show that
at the same time, $(\chi,1)$ is a monad morphism $R\to R'$, and that hence
$\chi$ is an awfs morphism $(L,R)\to(L',R')$. 

First, it is quick to check that $R'f\circ\chi_f=Rf$; consider
\begin{align*}
 R'f\circ M'(1,Rf)\circ\gamma_f & = Rf\circ R'Lf\circ\gamma_f \\
                               & = Rf
\end{align*}
where the first step follows from the properties of $M'(1,Rf)$ and the second
uses the fact that $\gamma_f$ is the structure map for a coalgebra. The other
identity $\chi$ must satisfy in order to be a monad morphism is shown by the
following diagram chase:
\begin{align*}
 \chi_f\circ\mu_f 
& = \chi_f\circ M(1,RRf)\circ(\delta_{Rf}\bullet\delta_f) \\
& = M'(1,RRf)\circ\chi_{(LRf\circ Lf)}\circ(\delta_{Rf}\bullet\delta_f) \\
& = M'(1,RRf)\circ F(\delta_{Rf}\bullet\delta_f) \\
& = M'(1,RRf)\circ(\gamma_{Rf}\bullet\gamma_f) \\
& = M'(1,RRf)\circ\mu'_{(LRf\circ Lf)}\circ 
       M'(M'(1,LRf)\circ\gamma_f,1)\circ\gamma_{Rf} \\
& = \mu'_f\circ M'(M'(1,RRf),RRf)\circ 
       M'(M'(1,LRf)\circ\gamma_f,1)\circ\gamma_{Rf} \\
& = \mu'_f\circ M'(\chi_f,RRf)\circ\gamma_{Rf} \\
& = \mu'_f\circ M'(\chi_f,1)\circ M'(1,RRf)\circ\gamma_{Rf} \\
& = \mu'_f\circ M'(\chi_f,1)\circ\chi_{Rf}
\end{align*}
in which we have used the fact that $\mu_f=(\delta_{Rf}\bullet\delta_f)\circ
M(1,RRf)$, the assumption that $F$ preserves composition, and the definition of
composition of $L'$-maps.

Since the correspondence we have demonstrated between awfs and composition
preserving functors is a restriction of the standard natural isomorphism between
comonad maps and functors on coalgebras, it is therefore a bijection and we are
done.
\end{proof} 

\begin{cor}
The awfs $(UK,E)$ is free with respect to $\cat{J}$.
\end{cor}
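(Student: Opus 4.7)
The corollary is essentially a bookkeeping exercise combining the universal property of $\cellcx$ established in the previous section with Lemma \ref{lem:bijection}; almost all the real work has been done. The plan is to chain the two results together via the comonadic identification of $\cellcx$ with $\map{UK}{}$.

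First, I would note that comonadicity of the adjunction $U\dashv K$ (proved in Section \ref{sec:comonadicity}) gives an equivalence $\cellcx\simeq\map{UK}{}$ over $\toparr$, so that the canonical functor $\eta\from\cat{J}\to\cellcx$ may equally be viewed as a functor $\eta\from\cat{J}\to\map{UK}{}$ over $\toparr$. This is the candidate unit morphism for the universal property of freeness.

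Next, given any awfs $(L',R')$ on $\textbf{Top}$ and any functor $F\from\cat{J}\to\map{L'}{}$ over $\toparr$, the universal property proposition at the end of Section \ref{sec:univ} produces a unique composition-preserving functor $F'\from\cellcx\to\map{L'}{}$ over $\toparr$ with $F=F'\circ\eta$. Applying Lemma \ref{lem:bijection} (in the direction from composition-preserving functors to awfs morphisms) converts $F'$ into an awfs morphism $\alpha\from(UK,E)\to(L',R')$ whose lift $\alpha_*\from\map{UK}{}\to\map{L'}{}$ coincides with $F'$ under the identification above. Then $F=F'\circ\eta=\alpha_*\circ\eta$, giving existence.

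For uniqueness, suppose $\alpha'\from(UK,E)\to(L',R')$ is any awfs morphism with $F=\alpha'_*\circ\eta$. Its lift $\alpha'_*$ is a composition-preserving functor $\map{UK}{}\to\map{L'}{}$ over $\toparr$ factoring $F$ through $\eta$; by the uniqueness clause of the universal property, $\alpha'_*=F'=\alpha_*$, and then by the bijectivity in Lemma \ref{lem:bijection}, $\alpha'=\alpha$. The main point requiring care is simply to verify that the lifting operation $(-)_*$ on awfs morphisms is genuinely the same as the functor-on-coalgebras side of the bijection of Lemma \ref{lem:bijection}; this is immediate from the construction of $\tilde\chi$ in that lemma, but it is the one place the argument could go wrong if the identifications were not made carefully.
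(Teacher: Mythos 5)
Your proposal is correct and matches the paper's intended argument exactly: the corollary is obtained by combining the universal property of $\cellcx$ from Section \ref{sec:univ} (transported along the comonadic equivalence $\cellcx\simeq\map{UK}{}$) with the bijection of Lemma \ref{lem:bijection}, which the paper itself signals by stating that ``the universal property of $\cellcx$ implies that $(UK,E)$ is free with respect to $\cat{J}$.'' Your added care about checking that the lift $\alpha_*$ agrees with the functor side of the bijection is a reasonable point the paper leaves implicit.
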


Since they share this universal property, the awfs $(UK,E)$ and
$(\mathbb{L},\mathbb{R})$ must be isomorphic. Thus we are finally able to prove
the main result of the paper.

\begin{thm}
Every map with a left-map structure with respect to $(\mathbb{L},\mathbb{R})$ is
in the class $\cat{J}$-$\mathrm{cell}$.
\end{thm}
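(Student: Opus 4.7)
The plan is to deduce the theorem from the general principle that two awfs sharing the same universal property must be isomorphic. The corollary immediately preceding the theorem shows that $(UK,E)$ is free with respect to $\cat{J}$; Garner's construction in \cite{Understanding} shows that $(\mathbb{L},\mathbb{R})$ is also free with respect to $\cat{J}$. Applying the universal property of $(UK,E)$ to the canonical functor $\cat{J}\to\map{L}{}$ produces an awfs morphism $\chi\from(UK,E)\to(\mathbb{L},\mathbb{R})$, and applying the universal property of $(\mathbb{L},\mathbb{R})$ to $\eta\from\cat{J}\to\cellcx$ produces an awfs morphism in the other direction. The two composites are endomorphisms of free awfs that each restrict to the identity on the image of $\cat{J}$, so by the uniqueness clause of the universal property they must each be the identity. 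Hence $(UK,E)\iso(\mathbb{L},\mathbb{R})$ as awfs.

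Next, I would invoke Lemma \ref{lem:bijection} in the reverse direction: the isomorphism $\chi$ of awfs is in particular an isomorphism of the underlying comonads, and so it lifts to an isomorphism $\chi_*\from\cellcx\to\map{L}{}$ of coalgebra categories that is strict over $\toparr$. Since $\chi_*$ commutes strictly with the two forgetful functors to $\toparr$, the image classes of these forgetful functors in $\toparr$ coincide. By Proposition \ref{prop:composition}, the image of $U\from\cellcx\to\toparr$ is exactly $\cat{J}$-cell; therefore the image of $\map{L}{}\to\toparr$ is also $\cat{J}$-cell, which is precisely the statement of the theorem.

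The main obstacle has in fact already been overcome in the preceding sections: the hard work was the verification of the universal property of $\cellcx$, which required Proposition \ref{prop:colimitscellcx} together with the associativity and stacking lemmas (Lemma \ref{lem:assoc} and Lemma \ref{lem:stacking}), as well as Lemma \ref{lem:bijection} to translate a property of a coalgebra category into a property of the corresponding awfs. Given that machinery, the theorem now follows purely formally from the uniqueness of objects defined by a universal property. As a side benefit, the stronger result alluded to in the introduction---the equivalence $\cellcx\simeq\map{L}{}$ of categories---comes out of the same isomorphism $\chi_*$, so no additional argument is needed to recover it.
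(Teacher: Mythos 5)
Your proposal is correct and follows essentially the same route as the paper: deduce $(UK,E)\iso(\mathbb{L},\mathbb{R})$ from the shared freeness property, lift to an equivalence of coalgebra categories over $\toparr$, and conclude via Proposition \ref{prop:composition}. The only point you gloss over is that identifying $\cellcx$ with the category of $UK$-coalgebras requires the comonadicity of the adjunction $U\dashv K$, which the paper cites explicitly at this step but which was already established in Section \ref{sec:comonadicity}.
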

\begin{proof}
The category $\cellcx$ which we defined is equivalent over $\toparr$ to the
category of coalgebras for the comonad $UK$, because the adjunction is
comonadic. We have also shown that $(UK,E)$ and $(\mathbb{L},\mathbb{R})$ are
isomorphic as awfs. Hence $UK$ and $\mathbb{L}$ are isomorphic as comonads; so
they have equivalent categories of coalgebras. 

Thus $\cellcx$ is equivalent over $\toparr$ to the category $\map{L}{}$ of
left-maps. Since, as we showed in Proposition~\ref{prop:image}, the image of the
functor $U$ is precisely $\cat{J}$-cell, every map with a left-map structure is
in $\cat{J}$-cell. 
\end{proof}

\section{Further thoughts}\label{sec:final}

What we have done applies only to a single awfs in a single category. Regardless
of the fact that it is arguably the most important awfs under study at the
moment, this is still quite a limitation. However, in proving this result for a
very specific case, we have demonstrated a technique that will, in the author's
view, quite easily extend to many other examples. There are many parts of the
proof where the argument would have worked equally well for any class of maps in
a category with all small limits and colimits. 

It seems likely that the construction of $\cellcx$ will work for any category
$\cat{J}$; to consider it in a more generalised context we would want to hone in
on what it \emph{really} does. It seems to consist of three steps: the first is
to freely complete $\cat{J}$ under colimits. This is a well understood
2-categorical completion which results in the category of presheaves over
$\cat{J}$. The second step is free completion under pushforwards, which produces
the category of strata. The third step is then to complete freely under
composition, resulting in the entire category of cell complexes. There is a very
close analogy between these three steps and the three steps of Garner's small
object argument which are outlined in \cite{Understanding}. 

One important feature of the definition of cell complexes given in this paper is
the fact that they are \emph{countable} sequences of strata. This is a
substantial limitation on the cardinalities involved, and one that is only
possible because of special properties enjoyed by the category $\textbf{Top}$
and the class of maps $\cat{J}$; they are the same finiteness conditions that
allow the standard small object argument to be terminated within $\omega$ steps.
In a general setting, one would need to check similar conditions, and in some
cases the ordinal number of iterations required may be larger. In these cases,
the definition of cell complex will be slightly more complicated because they
will be ordinal sequences of strata with length greater than $\omega$. However,
in the author's view, this extra complication in the definition will not be a
serious difficulty.

The largest challenges faced by an attempt to generalise this argument will come
in the case where the maps of $\cat{J}$ are not all strict monomorphisms. Many
of the definitions and propositions above are predicated quite strongly on the
assumption that `adding cells' to a complex is always a matter of `adding';
however, if a cell shape in $\cat{J}$ were not monic, the pushout we would
construct to glue it onto a complex would involve quotienting---adding a cell
could be a reduction. This may seem very counter-intuitive, but there are
important examples of awfs where exactly this behaviour would occur: in
particular, most of the awfs defined on (different strengths of) $n$-categories
have as their highest dimensional cell shape a non-monic map that identifies two
morphisms of the highest dimension. In this case, the definitions in
Section~\ref{sec:cellcx} would need to be very carefully reconsidered. For
example, if a cell shape is epimorphic, one could have any number of cells which
make no difference to the underlying morphism---most morphisms would have
infinitely many non-isomorphic cell complex structures. In particular, the
forgetful functor $U$ would not be conservative, and hence the proof would fail
as it currently stands.

\end{document}